\theoremstyle{plain}
\newtheorem{theo}{Theorem}[section]
\newtheorem{lem}[theo]{Lemma}
\newtheorem{prop}[theo]{Proposition}
\theoremstyle{definition}
\newtheorem{defi}[theo]{Definition}
\theoremstyle{remark}
\newtheorem{rem}{Remark}
\newtheorem*{rem*}{Remark}
\newcommand{\barint}{\rule[.036in]{.12in}{.009in}\kern-.16in \displaystyle\int}
\def\r{{\mathbb{R}}}
\def\rn{{\mathbb{R}^{n}}}
\def\K{\mathcal{K}}
\def\H{\mathcal{H}}
\def\W{\mathcal{W}}
\def\E{\mathcal{E}}
\def\diam{\mathrm{diam}\,}
\def\ind{\mathrm{ind}}
\def\vinf{{V^{(\infty)}}}
\def\vm{V^{(m)}}
\def\vo{V^{(0)}}
\def\mnb{\stackrel{m}{\sim}}
\def\grad{\langle \nabla f, Z\,\nabla f\rangle}
\newcommand{\pref}[1]{(\ref{#1})}
\numberwithin{equation}{section}
\title{Poincar\'e Inequality and Haj{\l}asz-Sobolev spaces on nested fractals}
\author{Katarzyna Pietruska-Pa{\l}uba
\\
Institute of Mathematics\\
University of Warsaw\\
ul. Banacha 2\\
02-097 Warsaw, Poland\\
e-mail: {\tt kpp@mimuw.edu.pl}
\\
\\
Andrzej Stos\\
Clermont Universit\'e, Universit\'e Blaise Pascal\\
Laboratoire de Math\'ematiques, CNRS UMR 6620,\\
BP 80026, 63171 Aubi\`ere, France\\
e-mail: {\tt stos@math.univ-bpclermont.fr}
}
\begin{document}
\date{}
\maketitle

\abstract{
Given a nondegenerate harmonic structure, we prove a Poincar\'e-type inequality for
 functions in the domain of the Dirichlet form on nested fractals.
 We then study the Haj{\l}asz-Sobolev spaces on nested fractals.
 In particular, we describe how the "weak"-type gradient on nested fractals relates to the
 upper gradient defined in the context of general metric spaces. \\

}

{\small \noindent
{{\bf Keywords:} Nested fractals, Poincar\'{e} inequality\\
{\bf 2010 MS Classification:} Primary 46E35, Secondary 31E05, 28A80.}

\section{Introduction}

The interest in analysis on fractals arose from mathematical
physics, and dates back to the 80's of the past century. The first
object to be meticulously defined was the Kigami Laplacian on the
Sierpi\'nski gasket \cite{Kig-lap}, and, somehow in parallel, the
Brownian motion on the gasket \cite{Bar-Per}. Since then, we have
seen an outburst of papers focusing both on analytic and
probabilistic aspects of stochastic processes with fractal
state-space. The analytic approach, concerned mostly with
Dirichlet forms, their domains and generators, proved particularly
useful while constructing processes on fractals. On the other
hand, derivatives on fractals have been defined
\cite{Kig3,Kus2,Str5,Tep} and their properties studied. For an
account of results from that time, as well as an extended list of
references, we refer to  \cite{Kig-book} (analytic) and \cite{Bar}
(probabilistic).

In present paper, departing from the definition of the gradient on
nested fractals from \cite{Kus2,Tep}, we prove certain
Poincar\'e-type inequalities on nested fractals, for functions
belonging to the domain of the Brownian Dirichlet form (which can
be seen as a fractal counterpart of the Sobolev space
$W^{1,2}(\mathbb{R}^d)$). We will then  be concerned with
Poicar\'e-Sobolev spaces {and spaces of Korevaar-Schoen type,}
 and our analysis will be much in spirit of \cite{Kos-MMan}
and \cite{KST}.

In the last paper mentioned, the authors consider general metric
measure spaces equipped with a Dirichlet structure $({\cal E},
{\cal D}({\cal E}))$, much alike the nested fractals we consider.
However, in order to proceed, they make a standing assumption that
the intrinsic metric related to the Dirichlet structure,
\[
d_E(x,y)=\sup\{\phi(x)-\phi(y): \phi\in \Gamma; \;d\eta_{\mathbb{R}}(\phi,\phi)\leq d\mu\},
\]
where $\Gamma$ is a $\mu$-separating core of $\cal E$, induces the
topology equivalent to the initial one. This assumption fails for
fractals: the metric $d_E$ is degenerate there, see
\cite{Bar-Bas-Kum}, p.6. So, in order to extend the results from
\cite{KST}, one should either modify the definition of $d_E$ or
choose a different approach.

A discussion of gradients with connetcion to the Poincar\'e
inequality and relation between various function spaces can be
found  in the recent paper \cite{GKZ}. While three types of
gradients are considered, the one used in P.I. is the so called
{\it upper gradient}, a notion that depends on rectifiable curves.
In the context of nested fractals there may be no such curves at
all. Again, for a meaningful theory a different notion of gradient
should be considered.

We propose a hands-on approach based on discrete approximations of
nested fractals and Kusuoka gradients. By a limiting procedure,
the gradient can be reasonably defined for functions belonging to
the domain of the Dirichlet form, although it is usually hard to
decide whether the limit exists at a given point for functions
other than $m$-harmonic. This gradient can be used in
Poincar\'e-type inequalities and in defining variants of Sobolev
spaces on fractals.

We start with a local version of Poincar\'e inequality (P.I., for
short), which then yields a global P.I. on nested fractals. We
obtain inequalities of the form
\begin{equation}\label{eq-poinc-intro}
\barint_B |f-f_B|d\mu \leq C
r^{\frac{d_w}{2}}\left(\frac{1}{r^{d}}\int_{B(x_0, Ar)}\grad
d\nu\right)^{1/2},
\end{equation}
where $\mu$ is the $d$-dimensional Hausdorff measure on the
fractal, $\nu$ is the  Kusuoka energy measure on the fractal (see
Section \ref{sec-kus-meas} for a precise definition), $d_w$ is the
walk dimension of the fractal we are considering, $d$ its
Hausdorff dimension, and $\grad$ replaces the square of the norm
of the gradient. The measure $\nu$ is typically singular with
respect to the Hausdorff measure, but does not charge points.
Observe that in the Euclidean case we have $d_w=2$,
and so the scale function in P.I. will be linear as it should.
Poincar\'e inequalities involving  the Dirichlet energy measure in
a general setting have been investigated in the paper
\cite{Bar-Bas-Kum}, but that paper did not relate to the
definition of gradients on fractal sets.
{A choice, or even the existence,
of a gradient is not obvious on fractals.  We propose to
use a weak-type gradient with the energy measure (cf. Section 2
\ref{grad-nest}). }

As an application, in the second part of our paper, we compare
several possible definitions of Sobolev-type functions on
fractals. On metric spaces, several definitions of Sobolev-type
spaces have been considered (see e.g.  \cite{FHK},
\cite{Hajl-met}, \cite{KS}),  and nested fractals are of
particular interest in this context.  In present paper, we
introduce Poincar\'{e}-inequality based Sobolev spaces on fractals
and examine their relation with Korevaar-Schoen spaces and
Hajlasz-Sobolev spaces. While in a typical situation on metric
spaces the scaling factor in a Poincar\'e inequality is $r$, the
radius of a given ball, it turns out that on nested fractals it
doesn't yield an interesting inequality. To deal with relevant
Sobolev spaces, one should take into account the specific geometry
of the fractal and use a scaling factor $r^{d_w/2}$, as in
\pref{eq-poinc-intro}. For  some preliminary relations between
Hajlasz-Sobolev and Korevaar-Schoen Sobolev spaces on fractals we
refer to  a paper by Hu \cite{Hu}.

\section{Preliminaries}

We use $C$ or $c$ to denote a positive constant depending possibly
on the fractal set, whose exact value is not important for our
purposes and which may change from line to line. We will write
$f\asymp g$ (on a set $D$) if there exists a constant $C>0$ such
that for every $x\in D$ one has $C^{-1}g(x) \le f(x) \le Cg(x)$.
For an $m$-integrable function $f$ and a set $A$ of finite measure
we adopt the notation $f_A=\barint_Afdm=\frac{1}{m(A)}\int fdm.$

\subsection{Nested fractals}\label{nested fractals}

The framework of nested fractals is that of Lindstr\"om \cite{Lin}.
Suppose that $\phi_1,...,\phi_M,$ $ M\geq 2,$ are  similitudes of
$\mathbb{R}^N$ with a common scaling factor $L>1$. When $A\subset
\rn$, then we write $\Phi(A)$ for $\bigcup _{i=1}^M \phi_i(A),$
and $\Phi^m$ for $\Phi$ composed $m$ times. There exists a unique
nonempty compact set (see \cite{Fal}, \cite{Lin}) $\K\subset
\mathbb{R}^N$ such that
\begin{equation}\label{ka}
\K =\bigcup_{i=1}^M\phi_i(\K)=\Phi(\K).
\end{equation}
It is called the {\em self-similar fractal} generated by the
family of similitudes $\phi_1,...\phi_M.$ Since the set $\K$ has a
finite nonzero diameter, for simplicity we can and will assume
that $\diam\K=1.$

Each of the mappings $\phi_i$ has a unique fixed point $v_i.$
Such a point is called {\em an essential fixed point} if there
exists another fixed point $v_j$ such that for some
transformations $\phi_k, \phi_l$ one has
$\phi_k(v_i)=\phi_l(v_j).$ The set of all essential fixed points
will be denoted by $V^{(0)}=\{v_1,...,v_r\}.$ For $m=1,2,...$ we
set $\vm = \Phi^m(\vo)$ and $ \vinf = \bigcup_{m\ge 0} \vm$. For
nondegeneracy, we assume that $r= \# \vo \ge 2$.

\smallskip

The system $\{\phi_1,...,\phi_M\}$ {is said to satisfy}
 the {\em open set
condition} if there exists an open, nonempty set $U$ such that
$\Phi(U)\subset U$ and for all $i\neq j$ one has $\phi_i(U)\cap
\phi_j(U)=\emptyset.$   If the open set condition is satisfied,
then the Hausdorff dimension of the self-similar fractal $\mathcal
K$ is equal to $d=d({\mathcal K})=\frac{\log M}{\log L}.$ By $\mu$
we denote the $d$-dimensional Hausdorff measure on $\K$ normalized
so that $\mu(\K)=1$.

For $m\geq 1,$ by a {\em word } of length $m$ we mean a sequence
$w=(w_1,...,w_m)\subset\{1,...,M\}^m.$ Collection of all words of
length $m$ is denoted by $\mathcal W_m;$ $\mathcal
W_*=\bigcup_{m\geq 1}\mathcal W_m$ consists of all words of finite
length, $\mathcal W$ is the collection of all infinite words. When
$w\in\mathcal W_*$ is a finite word, then $|w|$ denotes its
length. If  $w\in\mathcal W$ is  an infinite word, then $[w]_m$
denotes its restriction to first  $m$ coordinates, i.e. for
$w=(w_1,w_2,...),$ $[w]_m=(w_1,...,w_m).$ When
$w=(w_1,\ldots,w_m)$ is given, then we will use the notation
$\phi_w=\phi_{w_1}\circ\cdots\circ\phi_{w_m},$ and for a set $A,$
$A_w=\phi_w(A).$

\begin{defi}\label{sxy}
Let $m\geq 1$.
\begin{itemize}
\item[(1)] An {\em $m$-simplex} is any  set of the form $\phi_w(\K)$ with $w\in\mathcal W_m$
($m$-simplices are just scaled down copies of $\K).$ The
collection of all $m$-simplices will be denoted by ${\mathcal
T}_m.$ The $0$-simplex is just $\K.$

\item[(2)] For an $m$-simplex $S=\phi_w(\K),$ $w\in\mathcal W_m,$ let $V(S)=\phi_w(V^{(0)})$ be the set of its vertices. An {\em $m$-cell} is any of the sets $\phi_w(V^{(0)}).$ Two points $x,y\in V^{(m)}$ are called {\em $m$-neighbors}, denoted $x \mnb y$, if they belong to a common $m$-cell.

\item[(3)] If $\Delta\in {\mathcal T}_m$, $m\geq 1$, we denote by $\Delta^*$ the union of $\Delta$ and all the adjacent $m$-simplices, and by $\Delta^{**}$ -- the union of $\Delta^*$ and all $m$-simplices adjacent fo $\Delta^*.$

\item [(4)] For any $x\in \K \setminus \vinf$ and $m\geq 1,$ set $\Delta_m(x)$ to be the unique  $m$-simplex that contains $x$.

\item[(5)]  For any $x,y\in \K\setminus V^\infty$, define $\mathrm{ind}(x,y) = \min\{m\ge 1:\, \Delta_m(x)\cap \Delta_m(y)=\emptyset \}$.  When $\mathrm{ind}(x,y) = n$, we set $S(x,y)=\Delta_{n-1}(x)\cup \Delta_{n-1}(y)$.

\item[(6)] When an $m$-simplex $\Delta= \K_w =\phi_w(\K),$ $w\in\mathcal W_m$ is given and $\tilde w\in\mathcal W_n$ is another finite word, then by $\Delta_{\tilde w}$ we denote the $(m+n)$-simplex $\phi_{w\tilde w}(\K).$

\end{itemize}
\end{defi}

From now on we will assume that for every $S,T\in {\mathcal T}_m,$
$m\ge 1$, with $S\not=T$, one has $S\cap T=V(S)\cap V(T)$
(nesting).  Define the graph structure $E_{(1)}$ on $V^{(1)}$ as
follows: { we say that $(x,y)\in E_{(1)},$ if $x$ and $y$ are
$1$-neighbors.} Then we require the graph $(V^{(1)},E_{(1)})$ to
be connected. For $x,y\in V^{(0)}$, let $R_{x,y}$ be the
reflection in the hyperplane bisecting the segment $[x,y].$ Then
we stipulate that
\[
\forall _{i\in\{1,...,M\}}\forall _{x,y\in V^{(0)},\;x\neq y}
\exists _{j\in\{1,...,M\}}\;\;
R_{x,y}(\phi_i(V^{(0)}))=\phi_j(V^{(0)})
\]
(natural reflections map 1-cells  onto 1-cells).

The self-similar fractal $\K$ is called a {\em nested fractal}, if
it satisfies the above open set condition, nesting, invariance
under local isometries, and the connectivity assumption.

\medskip

\noindent Part of our results will require the following Property
{\bf (P)} of the fractal:

\medskip

{\bf\noindent Property (P).} There exist $\alpha>0$ such that for
all $n=1,2,...$ and  $x,y$ -- nonvertex points such that
$y\in\Delta_n^*(x)\setminus \Delta_{n+1}^*(x)$ one has
\begin{equation}\label{pppp}
\rho(x,y)\geq \frac{\alpha}{L^n}.
\end{equation}

\medskip

\begin{rem}\label{num}
Property {\bf (P)} holds true for nested fractals such that
the similitudes $(\phi_i)_{i=1,...,M}$ have the same unitary part.
This class of fractals contains the well-knows examples such as the
Sierpi\'{n}ski gaskets, snowflakes, the Vicsek set etc. Proof of
this statement is given in the Appendix.
\end{rem}

Clearly, if $\mathrm{ind}(x,y) = n,$ then $\Delta_{n-1}(x)\cap
\Delta_{n-1}(y) \not=\emptyset$. These sets either coincide or are
adjacent (ie. they meet at exactly one point). Moreover, under
Property {\bf (P),} the index $\mbox{ind}\,(x,y)$ is closely
related to the {Euclidean} distance of $x,y.$

\begin{lem}\label{ind-n}
1. For any fixed $x\in \K\setminus \vinf$ and $n\ge 2$, one has
\begin{equation}\label{eq-ind-n}
 \{ y:\, \mathrm{ind}(x,y) = n  \}  =
\Delta^*_{n-1}(x)\setminus \Delta^*_n(x).
\end{equation}
2.  Assume  additionally that  the fractal $\K$ satisfies the
property {\bf (P)}.  If $  \mathrm{ind}\,(x,y) = n$ then
\begin{equation}\label{ind-dist}
\rho(x,y) \asymp L^{-n},
\end{equation}
$\rho(x,y)$ being the Euclidean distance.
\end{lem}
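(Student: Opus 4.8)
My plan is to reduce both parts of Lemma~\ref{ind-n} to a single combinatorial equivalence and then read off the metric estimates from it. The equivalence I would first establish, valid for a fixed non-vertex $x$, any non-vertex $y$, and every $m\ge 1$, is
\begin{equation}\label{eq-key-equiv}
\Delta_m(x)\cap \Delta_m(y)\neq\emptyset \iff y\in \Delta^*_m(x).
\end{equation}
Both directions use only nesting together with the fact that a non-vertex point lies in a \emph{unique} $m$-simplex. If the intersection is nonempty then $\Delta_m(y)$ either coincides with $\Delta_m(x)$ or is a distinct $m$-simplex meeting it; by nesting two distinct $m$-simplices meet only in common vertices, so in the latter case $\Delta_m(y)$ is one of the simplices adjacent to $\Delta_m(x)$, whence $y\in\Delta_m(y)\subset\Delta^*_m(x)$. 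Conversely, if $y\in\Delta^*_m(x)$, the unique $m$-simplex carrying $y$ is $\Delta_m(x)$ or one of its neighbours, and by definition of $\Delta^*_m$ it meets $\Delta_m(x)$.

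For Part~1 I would record that disjointness is monotone in $m$: since $\phi_{w'j}(\K)\subset\phi_{w'}(\K)$ we have $\Delta_{m+1}(x)\subset\Delta_m(x)$ and likewise for $y$, so once $\Delta_m(x)\cap\Delta_m(y)=\emptyset$ the intersection stays empty at all finer levels. Hence $n=\ind(x,y)$ is exactly the threshold at which separation first occurs, i.e. $\Delta_{n-1}(x)\cap\Delta_{n-1}(y)\neq\emptyset$ while $\Delta_n(x)\cap\Delta_n(y)=\emptyset$. Applying \pref{eq-key-equiv} at levels $n-1$ and $n$ translates these two conditions into $y\in\Delta^*_{n-1}(x)$ and $y\notin\Delta^*_n(x)$, which is precisely \pref{eq-ind-n}; the assumption $n\ge 2$ is needed only so that $\Delta^*_{n-1}(x)$ is defined.

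For Part~2 I would split \pref{ind-dist} into its two inequalities. The upper bound is purely metric and uses Part~1: $y\in\Delta^*_{n-1}(x)$, which is the union of $\Delta_{n-1}(x)$ with finitely many adjacent $(n-1)$-simplices, each a copy of $\K$ scaled by $L^{-(n-1)}$ and so of diameter $L^{-(n-1)}$. Passing from $x$ to $y$ through at most one shared vertex gives $\rho(x,y)\le\diam\Delta^*_{n-1}(x)\le 2L^{-(n-1)}=2L\cdot L^{-n}$. The lower bound is the place where Property~$\mathbf{(P)}$ is indispensable: by Part~1, $\ind(x,y)=n$ means $y\in\Delta^*_{n-1}(x)\setminus\Delta^*_n(x)$, so invoking \pref{pppp} with $n$ replaced by $n-1$ yields $\rho(x,y)\ge \alpha L^{-(n-1)}\ge \alpha L^{-n}$. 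Together with the upper bound this gives $\rho(x,y)\asymp L^{-n}$. The residual case $n=1$, not covered by Part~1, I would treat directly: $\ind(x,y)=1$ forces $x,y$ into non-adjacent $1$-simplices, a finite family of pairwise disjoint compacta whose mutual distances are bounded below by a positive geometric constant, while $\rho(x,y)\le\diam\K=1$, so again $\rho(x,y)\asymp 1\asymp L^{-1}$.

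The only genuinely nontrivial input is the lower bound in Part~2, and essentially all its content has been packaged into Property~$\mathbf{(P)}$: without such a hypothesis, simplices of equal index could in principle approach one another arbitrarily closely (through folding of the similitudes' unitary parts), and no distance lower bound in terms of the index would hold. Thus the reduction above identifies $\mathbf{(P)}$ as exactly the hypothesis that has to be spent, and everything else is bookkeeping with the nesting axiom, the uniqueness of the containing simplex, and simplex diameters.
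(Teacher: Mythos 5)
Your proof is correct and takes essentially the same route as the paper's: the same key equivalence $y\in\Delta^*_m(x)\iff\Delta_m(x)\cap\Delta_m(y)\neq\emptyset\iff\ind(x,y)\ge m+1$, combined with the monotonicity of the sets $\Delta_m(\cdot)$, gives Part 1, and Part 2 is then read off from Property {\bf (P)} together with the simplex-diameter bound. You merely make explicit what the paper leaves implicit, namely the two-sided estimate $\alpha L^{-(n-1)}\le\rho(x,y)\le 2L\cdot L^{-n}$ and the residual case $n=1$.
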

\begin{proof}
Fix $x\in \K\setminus \vinf$ and $n\ge 2$. Observe that  $y \in
\Delta^*_n(x)$ if and only if
$\Delta_n(x)\cap\Delta_n(y)\neq\emptyset,$ which is equivalent to
$\mbox{ind}\,(x,y)\geq n+1.$ Since $\{\Delta_{n}(x)\}_n$ is a
decreasing sequence of sets, \pref{eq-ind-n} follows.  Relation
\pref{ind-dist} follows from \pref{eq-ind-n} and Property {\bf
(P)}.
\end{proof}

\subsection{Gradients of nested fractals}\label{grad-nest}
To proceed, we need to define the gradient. The material in this
section is classic and follows mainly \cite{Kig-book} and
\cite{Tep}. For other results concerning gradients on fractals we
refer to \cite{Kus2,Kig3,Str5}.
\subsubsection{Nondegenerate harmonic structure on $\K$} \label{nondegstruc}
Suppose that $\K$ is the nested fractal associated with the system
$\{\phi_1,...,\phi_M\}.$ Let $A=[a_{x,y}]_{x,y\in\vo}$ be a {\em
conductivity matrix} on $\vo,$ i.e. a symmetric real matrix with
nonnegative off-diagonal entries and such that for any $x\in\vo,$
$ \sum_{y\in \vo}a_{x,y}=0.$ For $f:\vo\to\r,$ set ${\mathcal
E}_A^{(0)}(f,f)=\frac{1}{2}\sum_{x,y\in\vo}a_{x,y}(f(x)-f(y))^2.$
Then we define two operations:
\begin{itemize}
\item[(1)] {\em Reproduction}. For $f\in C(V^{(1)})$ we
let
\[\widetilde{\mathcal E}^{(1)}_A(f,f)=\sum_{i=1}^M {\mathcal
E}^{(0)}_A(f\circ \phi_i, f\circ\phi_i).\] The mapping ${\mathcal
E}_A^{(0)}\mapsto \widetilde{\mathcal E}^{(1)}_A$ is called the
{\em reproduction map} and is denoted by $\mathcal R$.\\
\item[(2)] {\em Decimation}. Given a symmetric form $\mathcal E$ on
$C(V^{(1)}),$ define its restriction to $C(V^{(0)}),$ ${\mathcal
E}_{V^{(0)}},$ as follows. Take $f:V^{(0)}\to\mathbb{R},$ then set
\[{\mathcal E}|_{V^{(0)}}(f,f)=\inf\{{\mathcal E}(g,g):
g:V^{(1)}\to\mathbb{R}\mbox{ and } g|_{V^{(0)}}=f\}.\] This
mapping is called the {\em decimation map} and will be denoted by
${\mathcal D}e.$
\end{itemize}
Let $\bf G$ be the symmetry group of $V^{(0)},$ i.e. the group of
transformations generated by symmetries $R_{x,y},$ $x,y\in
V^{(0)}.$ Then we have (\cite{Lin}, \cite{Sab}):
\begin{theo}\label{titi}
Suppose $\mathcal K$ is a nested fractal. Then there exists a
unique number $\rho=\rho({\mathcal K})>1 $ and a unique, up to a
multiplicative constant, irreducible conductivity matrix $A$ on
$V^{(0)},$ invariant under the action of $\bf G,$ and such that
\begin{equation}\label{rho}
({\mathcal D}e\circ{\mathcal R})({\mathcal E}^{(0)}_A) =
\frac{1}{\rho}\,{\mathcal E}^{(0)}_A.
\end{equation}
\end{theo}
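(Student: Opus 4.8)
The plan is to realize $\Lambda := {\mathcal D}e\circ{\mathcal R}$ as a nonlinear renormalization operator on the cone $\mathcal C$ of conductivity matrices on $\vo$ (equivalently, symmetric Dirichlet forms on $\vo$) and to produce $A$ as its Perron--Frobenius eigenvector. First I would record the structural properties of the two constituent maps. The reproduction map ${\mathcal R}$ is \emph{linear} in the form. The decimation (trace) map ${\mathcal D}e$ is, by its variational definition as an infimum over extensions, positively homogeneous of degree one, monotone in the Rayleigh sense (enlarging conductances enlarges the effective form), and \emph{concave} on $\mathcal C$, being a pointwise infimum of the linear functionals $g\mapsto {\mathcal E}(g,g)$. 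A Schur-complement computation shows that ${\mathcal D}e$ sends a conductivity matrix to a conductivity matrix, and the connectivity of $(V^{(1)},E_{(1)})$ guarantees that the traced form is again irreducible; thus $\Lambda$ preserves the interior of $\mathcal C$. Finally, since the group $\mathbf{G}$ acts by isometries compatible with the $\phi_i$ and with nesting, $\Lambda$ commutes with the $\mathbf{G}$-action and hence preserves the subspace ${\mathcal C}^{\mathbf{G}}$ of $\mathbf{G}$-invariant forms.

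For existence I would restrict $\Lambda$ to the convex cone ${\mathcal C}^{\mathbf{G}}$ and normalize, say by intersecting with a fixed affine cross-section (equivalently, passing to the projective slice). On the resulting compact convex set the monotone, degree-one-homogeneous, concave map $\Lambda$ admits a fixed direction: one may either apply Brouwer's theorem to the induced continuous self-map $E\mapsto \Lambda(E)/\ell(\Lambda(E))$ of the normalized slice, or invoke that monotone homogeneous maps are non-expansive for Hilbert's projective metric. Either way one obtains an irreducible, $\mathbf{G}$-invariant $A$ and a scalar $\lambda>0$ with $\Lambda({\mathcal E}^{(0)}_A)=\lambda\,{\mathcal E}^{(0)}_A$; writing $\lambda=1/\rho$ gives \pref{rho}.

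To see $\rho>1$, i.e. $\lambda<1$, I would use the Dirichlet principle defining ${\mathcal D}e$. Let $f$ be nonconstant on $\vo$ and let $g$ be the $\widetilde{\mathcal E}^{(1)}_A$-minimizing extension, so that $\lambda\,{\mathcal E}^{(0)}_A(f,f)=\sum_{i=1}^M{\mathcal E}^{(0)}_A(g\circ\phi_i,g\circ\phi_i)$. Comparing this minimal energy with that of an explicit test extension of $f$ across $V^{(1)}$ — the boundary values on $\vo$ are transmitted only through the interior junction vertices, so a strict Rayleigh/shorting inequality applies whenever $f$ is nonconstant — yields $\lambda\,{\mathcal E}^{(0)}_A(f,f)<{\mathcal E}^{(0)}_A(f,f)$, hence $\lambda<1$ and $\rho>1$.

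The genuinely hard part is uniqueness of $A$ up to a multiplicative constant. On the full cone $\mathcal C$ the map $\Lambda$ need not be a strict projective contraction, and the $\mathbf{G}$-invariance together with irreducibility are precisely what should single out a one-dimensional fixed ray. The cleanest route is to show that, restricted to the invariant irreducible cone, $\Lambda$ is \emph{strictly} contracting in Hilbert's projective metric — which would follow from the strict concavity and strict monotonicity produced by irreducibility together with the connectivity of the level-$1$ graph — so that Birkhoff's theorem forces a unique projective fixed point. This uniqueness assertion is exactly the content of the Lindstr\"om--Sabot analysis cited as \cite{Lin,Sab}, and I expect it, rather than existence or the bound $\rho>1$, to be the main obstacle.
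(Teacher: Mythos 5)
First, for calibration: the paper offers no proof of Theorem \ref{titi} at all. It is stated as an imported result, prefaced by ``Then we have (\cite{Lin}, \cite{Sab})'' --- existence and $\rho>1$ are due to Lindstr\"om, uniqueness to Sabot --- so there is no in-paper argument to compare yours against. Judged on its own terms, your sketch does reconstruct the architecture of the cited proofs correctly: the renormalization $\Lambda={\mathcal D}e\circ{\mathcal R}$ as a monotone, homogeneous, concave self-map of the cone of conductivity matrices; existence of an eigen-ray by a fixed-point argument on a normalized $\mathbf{G}$-invariant slice; $\rho>1$ by a resistance comparison; uniqueness via projective-metric contraction (this last route is closer in spirit to Metz's work on renormalization contractions than to Sabot's actual argument, but it is a legitimate path).

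As a proof, however, the proposal has genuine gaps, and the decisive one is the one you flag yourself. You assert that strict contraction of $\Lambda$ in Hilbert's projective metric ``would follow from the strict concavity and strict monotonicity produced by irreducibility together with the connectivity of the level-$1$ graph.'' That implication is false as stated: for a nonlinear map, monotonicity, homogeneity and concavity yield only \emph{non-expansiveness} in Hilbert's metric (Birkhoff's contraction theorem concerns linear cone maps), and non-expansive maps can have many fixed rays. Indeed, irreducibility and connectivity hold for general finitely ramified self-similar sets, where uniqueness of the eigenform is known to fail; any correct proof must therefore exploit the $\mathbf{G}$-symmetry of nested fractals well beyond merely restricting $\Lambda$ to the invariant cone, and this is precisely where the depth of Sabot's and Metz's analysis lies. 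Deferring that step to \cite{Lin,Sab} leaves the theorem's hardest claim cited rather than proven --- which, in fairness, is exactly what the paper does. A second, lesser gap is in the existence step: the compact normalized slice necessarily contains reducible matrices on its boundary, where the decimated form can degenerate, so $E\mapsto\Lambda(E)/\ell(\Lambda(E))$ need not be a well-defined continuous self-map of the slice; controlling $\Lambda$ near this boundary is a substantial portion of Lindstr\"om's existence proof and is passed over in your sketch. The $\rho>1$ step is the soundest in spirit, though the ``strict shorting inequality'' still needs the fact (Lindstr\"om's Proposition IV.13, quoted in the paper's Appendix) that distinct essential fixed points lie in distinct $1$-cells, so that any current between them must cross at least two cells in series.
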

$A$ is called the {\em symmetric nondegenerate harmonic structure
on $\mathcal K$}. By analogy with the electrical circuit theory,
$\rho$ is called the {\em resistance scaling factor} of $\mathcal
K.$ The number  $d_w = d_w({\mathcal K}) \stackrel{def}{=}
\frac{\log(M\rho)}{\log L}
> 1$ is called the {\em walk dimension} of $\mathcal K$.
For further use, note that $\rho=L^{d_w-d}$.

\subsubsection{The canonical Dirichlet form on $\K$}
Suppose $A$ is the nondegenerate harmonic structure on $\K.$
Define ${\mathcal E}^{( 0)}={\mathcal E}^{( 0)}_A,$ then let
\[
\widetilde{\mathcal E}^{(m)}(f,f)= \rho^m \sum_{|w|=m}{\mathcal
E}^{(0)} (f\circ\phi_w,f\circ\phi_w),\qquad f\in C(\vm).
\]
The sequence $\widetilde{\mathcal E}^{( m )}$ is nondecreasing,
i.e. for every $f:V^{(\infty)}\to\mathbb{R},$ one has
\[\widetilde{\mathcal E}^{(m)}(f,f)\leq \widetilde{\mathcal E}^{(m+1)}(f,f), \quad m=0,1,2,\ldots\]
Set $\widetilde{\mathcal D} = \{f:V^{(\infty)}\to\mathbb{R}:\sup_m
\widetilde{\mathcal E}^{(m)}(f,f)<\infty\}$ and for $f\in
\widetilde{\mathcal D}$
\begin{equation}\label{jeden}
\widetilde{\mathcal E}(f,f)=\lim_{m\to\infty}\widetilde{\mathcal
E}^{(m)}(f,f).
\end{equation}
Further, ${\mathcal D}={\cal D}(\cal E)=\{f\in C({\mathcal K}):
f|_{V^{(\infty)}}\in\widetilde{\mathcal D}\},$ ${\mathcal E
}(f,f)=\widetilde{\mathcal E}(f|_\vinf, f|_\vinf)$ for
$f\in{\mathcal D}.$

Then $({\mathcal E }, {\mathcal D} )$ is a regular local Dirichlet
form on $L^2({\mathcal K},\mu),$ which agrees with the group of
local symmetries of ${\mathcal K.}$ This Dirichlet form is also
called `the Brownian Dirichlet form on $\K$', and will be
essential in defining the gradient. It satisfies the following
scaling relation: for any $f\in {\mathcal D}$,
\begin{equation}\label{scaling-form}
{\mathcal E}(f,f)=\rho^m\sum_{w\in\mathcal W_m} {\mathcal
E}(f\circ\phi_w,f\circ\phi_w).
\end{equation}

\subsubsection{Harmonic functions on $\K$ and energy measure}\label{sec-kus-meas}
\begin{defi}\label{def_harmonic}
 Suppose $f:V^{(0)}\to \mathbb{R}$ is given. Then $h\in
{\mathcal D}({\mathcal E})$ is called {\em harmonic} on $\mathcal
K$ with boundary values $f,$ if ${\mathcal E}(h,h)$ minimizes the
expression ${\mathcal E}(g,g)$ among all $g\in {\mathcal
D}({\mathcal E}) $ such that $g|_{V^{(0)}}=f.$  The unique
harmonic function that agrees with $f$ on $\vo$ will be denoted by
$Hf.$
\end{defi}

Denote by $\mathcal H$ the space of all harmonic functions on
$\K.$ It is an $r$-dimensional linear space, which can be equipped
with the norm
\[\|h\|^2_{\mathcal H}={\mathcal E}(h,h)+(\sum_{x\in \vo} h(x))^2.\]
Further, $\widetilde{\mathcal H}$ denotes the orthogonal
complement in $\mathcal H$ of the (one-dimensional) subspace of
constant functions, and let $\widetilde {P}:{\mathcal H}\to
\widetilde {\mathcal H}$ be the orthogonal projection onto
$\widetilde{\mathcal H}$. The norm on $\widetilde{\H}$ is given by
$\|h\|^2={\mathcal E}(h,h)$ (note that $\|\cdot\|$ is a seminorm
on $\H,$ vanishing on constant functions), and the corresponding
scalar product on $\widetilde {\mathcal H}$ will be denoted by
$\langle \cdot,\cdot\rangle.$

Next, for $i=1,...,M,$ we define the map $M_i:{\H}\to{\H}$ by
$M_ih=h\circ\phi_i,$ and $\widetilde {M}_i:\widetilde{\H}\to
\widetilde{\H}$ by $\widetilde{M}_i= \widetilde{P}\circ M_i.$ From
the scaling relation \pref{scaling-form} we deduce that for
$h\in\widetilde{\H}$ and $m\geq 0,$
\begin{equation}\label{scaling-harm}
\|h\|^2= \rho^m\sum_{|w|=m} \|\widetilde{M}_{w}h\|^2,
\end{equation}
where by $\widetilde {M}_w$  we have denoted
$\widetilde{M}_{w_m}\circ\cdots\circ\widetilde{M}_{w_1}h=\widetilde
P(h\circ\phi_w).$

For $f\in {\mathcal D},$ let's define the {\em energy measure
associated with $f$} as the measure whose value on any given
$m$-simplex $\K_w=\K_{w_1\ldots w_m}$ is equal to
\begin{equation}\label{en-meas-def}
\nu_f(\K_w)=\rho^m\,{\mathcal E}(f\circ \phi_w,f\circ\phi_w).
\end{equation}
When $h\in \H$ is a harmonic function and $w\in \W_m$, then
$\nu_h(\K_w)=\rho^m\|M_wh\|^2.$ Let $h_1,...,h_{r-1}$ be an
orthonormal basis in $\widetilde\H.$ Then the expression
\begin{equation}\label{kus-measure}
\nu\stackrel{def}{=}\sum_{i=1}^{r-1}\nu_{h_i}
\end{equation}
does not depend of the choice of the orthonormal basis and its
value on an $m$-simplex $\K_w$ is equal to
\[\nu(\K_w)= \rho^m \mbox{Tr}\, \widetilde{M}^*_w\widetilde{M}_w.\]
The measure given by \pref{kus-measure} is called the {\em Kusuoka
measure}, or the {\em energy measure} on $\K.$ This measure has no
atoms, and typically is singular with respect to the measure
$\mu.$

\subsubsection{Gradients}
When $x\in\K$ is a nonlattice point, then $x$ has a unique
address: it is an (infinite) sequence $w=w_1w_2\ldots$ such that
$x=\bigcap_{m=1}^\infty \K_{[w]_m}$ (recall that we have denoted
$[w]_m=(w_1...w_m)$). For such a nonlattice point, let
\begin{equation}\label{labelek}
Z_m(x)=\left\{\begin{array}{ll}
\displaystyle\frac{\widetilde{M}^*_{[w]_m}\widetilde{M}_{[w]_m}}{\mbox{Tr}
\,\widetilde{M}^*_{[w]_m}\widetilde{M}_{[w]_m}} & \mbox{if } \mbox{Rank}\,\widetilde{M}_{[w]_m}>0;\\
0 & \mbox{otherwise}. \end{array}\right.
\end{equation}
It can be shown that $Z_m(\cdot)$ is a bounded, matrix-valued
martingale with respect to $\nu,$ and as such it is convergent
$\nu$-a.s. to an integrable function $Z(\cdot).$

For a nonlattice point $x$ with address $w,$ set
\[\nabla_{m}f(x)=\widetilde{M}_{[w]_m}^{-1}(\widetilde{P} H)(f\circ
\phi_{[w]_m}),\;\;\; m=1,2,...,\] then  the gradient of $f$ at
point $x$ is the element of $\widetilde\H$ given by
\[{\nabla} f(x)=\lim_{m\to\infty}\nabla_{m}f(x),\]
provided the limit exists.   For the discussion of the `pointwise
gradients' and their properties we refer to \cite{Tep},
\cite{Pel-Tep} and \cite{Hin}. But even if the pointwise limits of
$\nabla_{m}$ are not known to exist, we do know (see \cite{Kus2},
Lemmas 3.5 and 5.1, and also the discussion in \cite{Tep}, p. 137)
that when $f\in {\mathcal D},$ then there exists a measurable
mapping $Y(\cdot, f)$ such that
\begin{equation}\label{grad-form}
\mathcal E(f,f)=\int_{\K}\langle Y(\cdot, f),Z(\cdot)Y(\cdot,
f)\rangle d\nu(\cdot).
\end{equation}
With an abuse of notation, we will write $\nabla f$ for the object
$Y(\cdot,f),$ which is defined $\nu$-a.e. When we will use the
pointwise value, it will be clearly indicated.

\begin{defi}\label{harm-m}
\begin{enumerate}
\item A continuous function $f:\K\to\r$ is called $m$-{\em harmonic} if $f\circ \phi_w$ is harmonic for any $w=(w_1...w_m)\in \W_m.$
\item
There exists a unique $m$-harmonic function with given values at
points from $V^{(m)}.$ For a continuous function $f$ on $\K,$ by
$H_mf$ we denote the unique $m$-harmonic function that agrees with
$f$ on $ V^{(m)}.$
\end{enumerate}
\end{defi}

\begin{rem}
When $f$ is $m$-harmonic, then for any nonlattice point $x\in\K$
with address $w\in\W_\infty$ one has
\[\nabla_{m}f(x)=\nabla_{m+n}f(x)\]
for any $n\geq 0,$ and so $\nabla f(x)$ exists at nonlattice
points (which are of full $\nu$-measure); note also that $\nabla_m
f - f$ (and thus also $\nabla f-f$) is constant inside each $\K_w$
with $|w|=m$.
\end{rem}

\section{Poincar\'e inequality on nested fractals}

Poincar\'e inequalities on nested fractals that one can find in
the literature (see e.g. \cite{Bar-Bas-Kum} and its references)
are usually written in the form
\begin{equation}\label{poinc-bbk}
{\int}_B |f-f_B|^2d\mu \leq c \Psi(R)\, {\int}_B d\Gamma(f,f),
\end{equation}
where $B$ is a ball of radius $R,$
$\Psi:\mathbb{R}_+\to\mathbb{R}_+$ is a scale function (most
commonly, $\Psi(R)=R^\sigma),$ and $\Gamma(f,f)$ is the energy
measure associated with the Brownian Dirichlet form on fractals.

Poincar\'e inequalities $P(q,p),$ on a metric measure space
$(X,\rho,\mu),$ with a doubling measure $\mu$ and another Radon
measure $\nu,$ are similar in spirit, but involve usually {\em two
} functions. One says that a pair of measurable functions $(f,g)$
satisfies the $(q,p)$-Poincar\'e inequality, when
\begin{equation}\label{poinc-clas}
\left({\barint}_B |f-f_B|^qd\mu\right)^{1/q}\leq C R
\left({\barint}_{\sigma B}|g|^pd\nu\right)^{1/p},
\end{equation}
where $\sigma\geq 1$ is a given number, and $\sigma B$ denotes the
ball concentric with $B,$ but with radius $\sigma$ times the
radius of $B.$ For an account of Poincar\'e inequalities in metric
spaces, we refer mainly to \cite{Haj-Kos}, and also to
\cite{Hajl-met}.

Poincar\'{e} inequalities on nested fractals we will be concerned
with will be variants of  two-weight inequalities.. The measure
$\mu$ appearing on the left-hand side will be the Hausdorff
measure on $\mathcal K$, while the measure $\nu$ on the right-hand
side will be the Kusuoka energy measure. Recall that the measure
$\nu$ in most cases is not absolutely continuous with respect to
$\mu.$ The difference from the classical case  is that the
integral on the right-hand side will not be a barred integral with
respect to the measure $\nu,$ but it will be  divided  by the
measure $\mu$ of the underlying set.

We start with a fractal version of Poincar\'e inequality -- where
balls are replaced  with simplices. This version does not require
property {\bf (P)} of the underlying fractal. The precise
statement reads as follows.

\begin{theo}\label{najwazniejsze}
Let  $f\in {\mathcal D}({\mathcal E}),$ and let $\Delta$ be any
$m$-simplex, $m\geq 0.$ Then we have
\begin{eqnarray}\label{poincfrac}
\barint_\Delta |f(x) - f_\Delta| d\mu(x) &\le& C\,
(\diam\Delta)^{d_w/2} \left( \frac{1}{\mu(\Delta^*)}
\int_{\Delta^*} \grad \,d\nu \right)^{1/2}\nonumber\\
&\leq& C L^{-md_w/2}\left(L^{-md}\int_{\Delta^*}\langle \nabla f,
Z\nabla f\rangle\;d\nu\right)^{1/2},
\end{eqnarray}
where $\Delta^*$ denotes the union of $\Delta$ and all
$m$-simplices adjacent to $\Delta$.
\end{theo}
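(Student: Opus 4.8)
The plan is to reduce the continuous Poincaré inequality to a discrete one on the vertex set $V^{(m)}$, exploit the scaling structure of the Dirichlet form, and then relate the discrete energy back to the gradient representation \pref{grad-form}. Since both sides of \pref{poincfrac} are invariant under the self-similar scaling — the left-hand side scales like the oscillation of $f$ on an $m$-simplex, and by \pref{scaling-form} and \pref{en-meas-def} the energy measure $\nu_f(\Delta) = \rho^m \E(f\circ\phi_w, f\circ\phi_w)$ carries the factor $\rho^m = L^{m(d_w-d)}$ — it suffices to prove the estimate for a $0$-simplex (i.e. $\Delta = \K$, $m=0$) and then transport it to a general $m$-simplex via the map $\phi_w$. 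The bookkeeping of the scaling factors $L^{-md_w/2}$ and $L^{-md}$ in the second line of \pref{poincfrac} will come directly from $\diam(\phi_w(\K)) = L^{-m}$, $\mu(\Delta) = M^{-m} = L^{-md}$, and $\rho^m = L^{m(d_w-d)}$, so that $(\diam\Delta)^{d_w/2}$ absorbs one power and $\mu(\Delta^*)^{-1/2}$ absorbs the measure normalization.

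First I would establish the base case on $\K$ itself. The left-hand side $\barint_\K |f - f_\K|\,d\mu$ is controlled, via the triangle inequality and the normalization $\mu(\K)=1$, by the oscillation $\sup_{x,y\in\K}|f(x)-f(y)|$, or more carefully by the $L^1$-deviation of $f$ from its mean. The natural bridge is to compare $f$ with the $m$-harmonic function $H_m f$ agreeing with $f$ on $V^{(m)}$: since $f$ is continuous and $V^{(\infty)}$ is dense, one can either work at a fixed level and let $m\to\infty$, or directly invoke that $\E(f,f) \geq \widetilde\E^{(m)}(f,f)$ for every $m$. The oscillation of $f$ across the fractal should then be dominated by $\E(f,f)^{1/2}$ up to a constant depending only on the harmonic structure $A$ and the connectivity of $(V^{(1)}, E_{(1)})$; this is essentially a discrete Poincaré/Sobolev embedding on the finite graph $V^{(0)}$ (or $V^{(1)}$), using that $A$ is irreducible so that its Dirichlet form controls differences of values along a connected path. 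The restriction to $\Delta^*$ rather than $\Delta$ on the right is what makes this work: neighboring simplices share vertices, so paths realizing the oscillation can be routed through the enlarged neighborhood.

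The key step is to convert $\E(f\circ\phi_w, f\circ\phi_w)$ back into the integral $\int \grad\,d\nu$. This is where the representation \pref{grad-form}, namely $\E(f,f) = \int_\K \gradv{\cdot}\,d\nu$, enters: applying it to $f\circ\phi_w$ on $\K$ and then changing variables via $\phi_w$ yields $\rho^m \E(f\circ\phi_w, f\circ\phi_w) = \int_{\Delta}\grad\,d\nu$, so the discrete energy on $\Delta$ is exactly the gradient integral against $\nu$ over $\Delta$. Enlarging the domain of integration from $\Delta$ to $\Delta^*$ only increases the right-hand side, giving the stated form. Passing from the first line to the second is then the purely arithmetic substitution of $\diam\Delta = L^{-m}$ and $\mu(\Delta^*) \asymp \mu(\Delta) = L^{-md}$ (the number of simplices adjacent to $\Delta$ being bounded uniformly by the finite geometry of the nested fractal).

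I expect the main obstacle to be the base-case oscillation estimate: controlling $\barint_\K |f - f_\K|\,d\mu$ by $\E(f,f)^{1/2}$ for a general $f\in\mathcal D$, not merely a harmonic one. For harmonic $f$ this is immediate because the energy is a positive-definite quadratic form on the finite-dimensional space $\widetilde\H$, and all norms there are equivalent, so oscillation $\asymp \|h\|_{\mathcal H}$. For general $f\in\mathcal D$ one must approximate by $H_m f$ and show the approximation does not blow up the constant — using $\E(H_m f, H_m f) \leq \E(f,f)$ (harmonic functions minimize energy) together with uniform convergence $H_m f \to f$. Making this uniform in $m$ and independent of $f$, while keeping the constant $C$ dependent only on the fractal and not on $\Delta$, is the delicate point; the self-similarity ensures the constant obtained on $\K$ is the same one that reappears on every $\Delta$, which is precisely what the scaling argument needs.
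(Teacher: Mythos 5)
Your overall strategy --- prove the inequality once on the $0$-simplex $\K$ and transport it to every $m$-simplex by self-similarity --- is genuinely different from the paper's argument (which fixes $\Delta$, decomposes $\int_\Delta\int_\Delta|f(x)-f(y)|^2d\mu\,d\mu$ into the shells $\{\ind(x,y)=n\}$, applies a local estimate for $m$-neighbors plus chaining, and finishes with a bounded-overlap covering argument), and if completed it would even give the slightly stronger conclusion with $\Delta$ in place of $\Delta^*$. However, both load-bearing steps of your plan are gaps, and the justifications you indicate for them would not close them. First, the identity $\rho^m\,{\mathcal E}(f\circ\phi_w,f\circ\phi_w)=\int_{\K_w}\grad\,d\nu$ is \emph{not} a change of variables: it is precisely Lemma \ref{lemscal} combined with \pref{grad-form}, and the paper's remark after that lemma warns against exactly your reading. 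The point is that $\nu(\K_w)=\rho^m\mathrm{Tr}(\widetilde{M}_w^*\widetilde{M}_w)$ depends on the word $w$ itself and not only on $|w|$, and $Z$ on $\K_w$ is related to $Z$ on $\K$ through conjugation by $\widetilde{M}_w$, not by a rigid copy; the uniform factor $\rho^{-m}=L^{-m(d_w-d)}$ appears only after the trace normalization inside $Z_n$ cancels against the density of $\nu$. Proving this requires the martingale structure of $Z_n$ (constancy on $n$-simplices), an $L^1$-convergence step via Scheff\'{e}'s theorem, and a three-stage approximation ($m$-harmonic, then $n$-harmonic, then general $f\in{\mathcal D}({\mathcal E})$).

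Second, the base case $\barint_\K|f-f_\K|\,d\mu\leq C\,{\mathcal E}(f,f)^{1/2}$ cannot be obtained the way you sketch. Norm equivalence on the finite-dimensional space $\widetilde{\mathcal H}$ handles genuinely harmonic functions, but $H_mf$ is $m$-harmonic, and the space of $m$-harmonic functions has dimension $\#V^{(m)}\to\infty$, so finite-dimensional norm equivalence yields constants that a priori blow up with $m$; energy minimization and uniform convergence $H_mf\to f$ do not repair this. What actually makes the oscillation bound work --- uniformly in $m$ and in $f$ --- is multi-scale chaining: one telescopes across levels, the contribution of scale $k$ being of order $\rho^{-k/2}{\mathcal E}(f,f)^{1/2}$ by the scaling relation \pref{scaling-form}, and the resulting geometric series converges precisely because $\rho=L^{d_w-d}>1$, i.e.\ $d_w>d$. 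This is what Proposition \ref{prop1} together with Step 1 of the proof of Theorem \ref{poincpoint} accomplishes in the paper. Your proposal never invokes $d_w>d$ anywhere, yet this is the crux of why a Poincar\'{e} inequality with the scale $r^{d_w/2}$ holds at all; the ``delicate point'' you flag is therefore not a technicality to be arranged later but the heart of the proof, and it remains open in your outline.
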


The proof will be given later on. Now, we start with a local
version of Poincar\'e inequality for adjacent lattice points.

\begin{prop}\label{prop1}
Suppose $f\in{\mathcal D}({\mathcal E}),$ and let
$x\stackrel{m}{\sim} y.$ Let $\K_w$ be the $m$-simplex that
contains both points $x,y,$ with address $w\in\mathcal W_m.$  Then
\begin{equation}\label{lem1-eq1}
|f(x)-f(y)|^2\leq C( \diam \K_w)^{d_w-d} \int_{\K_w} \grad d\nu.
\end{equation}
\end{prop}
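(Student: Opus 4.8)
The plan is to reduce the whole statement to a single finite-dimensional (``base level'') Poincar\'e inequality on $\vo$ by exploiting the self-similar scaling of the form and of the energy measure. First I would set $g=f\circ\phi_w\in\mathcal D(\mathcal E)$. Since $x\mnb y$ and both points lie in the $m$-cell $\phi_w(\vo)$, I can write $x=\phi_w(p)$ and $y=\phi_w(q)$ with $p,q\in\vo$; because $\phi_w$ is injective and the inequality is trivial when $x=y$, I may assume $p\neq q$. Then $f(x)=g(p)$ and $f(y)=g(q)$, so the left-hand side is $|g(p)-g(q)|^2$. For the right-hand side, recall $\diam\K_w=L^{-m}$ and $\rho=L^{d_w-d}$, so that $(\diam\K_w)^{d_w-d}=L^{-m(d_w-d)}=\rho^{-m}$; then by the definition \pref{en-meas-def} of the energy measure together with the fact that $\grad$ is the $\nu$-density of $\nu_f$ I get $\int_{\K_w}\grad\,d\nu=\nu_f(\K_w)=\rho^m\,\mathcal E(g,g)$. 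Hence the claimed inequality is equivalent to the scale-free estimate $|g(p)-g(q)|^2\le C\,\mathcal E(g,g)$.

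Next I would establish this base estimate. By monotonicity of the approximating forms, $\mathcal E(g,g)=\widetilde{\mathcal E}(g|_{\vinf},g|_{\vinf})\ge\widetilde{\mathcal E}^{(0)}(g,g)=\mathcal E^{(0)}_A(g|_{\vo},g|_{\vo})$, so it suffices to bound $|g(p)-g(q)|^2$ by a constant times $\mathcal E^{(0)}_A(g|_{\vo},g|_{\vo})=\tfrac12\sum_{u,v\in\vo}a_{u,v}(g(u)-g(v))^2$. Since $A$ is irreducible by Theorem \ref{titi}, the weighted graph on $\vo$ with edge set $\{(u,v):a_{u,v}>0\}$ is connected, so there is a path $p=p_0,p_1,\dots,p_k=q$ with $a_{p_{i-1},p_i}>0$. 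A telescoping sum and the Cauchy--Schwarz inequality then give $|g(p)-g(q)|^2\le k\sum_{i=1}^k(g(p_i)-g(p_{i-1}))^2\le C\,\mathcal E^{(0)}_A(g|_{\vo},g|_{\vo})$, where $C$ depends only on $\#\vo$ and on $\min\{a_{u,v}:a_{u,v}>0\}$, hence only on $\K$. Since $\vo$ is a fixed finite set, this constant is uniform over all admissible pairs $(p,q)$, all words $w$, and all $m$, which is exactly what is needed for the final constant to be fractal-dependent only.

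Combining the two steps closes the argument: $|f(x)-f(y)|^2=|g(p)-g(q)|^2\le C\,\mathcal E(g,g)=C\,\rho^{-m}\int_{\K_w}\grad\,d\nu=C(\diam\K_w)^{d_w-d}\int_{\K_w}\grad\,d\nu$. The one place that deserves care --- and which I expect to be the main (though not severe) obstacle --- is the localization $\int_{\K_w}\grad\,d\nu=\nu_f(\K_w)$: the representation \pref{grad-form} is a priori only a global identity on $\K$, and passing to the sub-simplex $\K_w$ relies on the Kusuoka-theory fact that the energy measure $\nu_f$ is absolutely continuous with respect to the Kusuoka measure $\nu$ with density precisely $\grad$. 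Once this density statement is taken from the setup of Section \ref{sec-kus-meas}, the scaling bookkeeping through \pref{en-meas-def} and $\rho=L^{d_w-d}$ is routine, and the remaining content is the elementary graph Poincar\'e inequality above. Note that no change of variables for the gradient is required, because the density identity is applied to $f$ itself on $\K_w$, while \pref{en-meas-def} supplies the energy of $g$.
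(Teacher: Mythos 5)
Your outline is structurally parallel to the paper's own proof: both write $g=f\circ\phi_w$, bound $|f(x)-f(y)|^2$ by $C\,\mathcal{E}(g,g)$ via the monotonicity $\mathcal{E}(g,g)\geq \mathcal{E}^{(0)}_A(g|_{\vo},g|_{\vo})$, and then convert $\mathcal{E}(g,g)$ into $\rho^{-m}\int_{\K_w}\grad\,d\nu$. Your base-level step is in fact slightly more robust than the paper's: the paper bounds $|f(x)-f(y)|^2$ by the single term $a_{x'y'}^{-1}\cdot 2\,\mathcal{E}^{(0)}_A(g|_{\vo},g|_{\vo})$, which implicitly requires $a_{x'y'}>0$ for the particular pair of vertices corresponding to $x,y$, whereas your chaining argument through the connectivity graph of $A$ needs only the irreducibility guaranteed by Theorem \ref{titi}.

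The genuine gap is the step you yourself flag and then wave through: the localization $\int_{\K_w}\grad\,d\nu=\nu_f(\K_w)=\rho^m\,\mathcal{E}(f\circ\phi_w,f\circ\phi_w)$. This is \emph{not} available from the setup of Section \ref{sec-kus-meas}: that section provides only the global identity \pref{grad-form}, $\mathcal{E}(h,h)=\int_\K\langle Y(\cdot,h),Z\,Y(\cdot,h)\rangle\,d\nu$, together with the definition \pref{en-meas-def} of $\nu_f$ on simplices; it nowhere asserts that $\grad$ is the $\nu$-density of $\nu_f$. Nor does the density statement follow from \pref{grad-form} by bookkeeping: applying \pref{grad-form} to $h=f\circ\phi_w$ produces $\int_\K\langle\nabla(f\circ\phi_w),Z\,\nabla(f\circ\phi_w)\rangle\,d\nu$, and relating this to $\int_{\K_w}\grad\,d\nu$ requires controlling how $\nabla$, the matrices $Z$, and the measure $\nu$ transform under $\phi_w$. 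That is precisely Lemma \ref{lemscal} of the paper, whose proof is a substantive three-step approximation argument (first $m$-harmonic $f$, using the explicit form of $Z_n$ and Scheff\'e's theorem; then $n$-harmonic $f$; then general $f\in\mathcal{D}(\mathcal{E})$ via $H_nf\to f$), and the paper's remark after that lemma stresses exactly this point: the identity ``is not tantamount to a simple change of variables but reflects an interplay between $\nabla f$ and $Z$''. So your proposal assumes the proposition's real content; to close it within the paper's framework you would have to prove the density/localization statement, i.e.\ reproduce Lemma \ref{lemscal} (or replace the citation by a precise reference to the energy-measure version of Kusuoka's theorem, which is stronger than anything quoted in this paper's preliminaries).
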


\begin{proof}
Set $c(x,y)=a_{x'y'}^{-1}$ where $x',y'\in\vo$ are such that
$x=\phi_w(x')$ and $y=\phi_w(y')$ (the matrix  $A=[a_{x,y}]$ was
introduced in Section \ref{nondegstruc}). Then we have:
\begin{eqnarray*}
|f(x)-f(y)|^2 & \leq&
c(x,y)) \sum_{u,v\in \vo} a_{uv}|f\circ\phi_w(u)-f\circ\phi_w(v)|^2 \\
&=& c(x,y)) {\mathcal E}^{(0)}(f\circ \phi_w,f\circ \phi_w)
\leq {\mathcal E}(f\circ\phi_w,f\circ\phi_w)\\
&=& c(x,y)\int _\K \langle \nabla (f\circ \phi_w), Z \,\nabla
(f\circ \phi_w) \rangle d\nu\\
&\leq & c_1 \int _\K \langle \nabla (f\circ \phi_w), Z \,\nabla
(f\circ \phi_w) \rangle d\nu,
\end{eqnarray*}
where $c_1=\sup \{a_{x,y}:x,y\in V^{(0)}\}.$

 Since
$\mbox{diam}\,\K_w=L^{-m}$, the scaling relation from Lemma
\ref{lemscal} below gives the desired statement.
\end{proof}

\begin{lem}\label{lemscal}
Let $f\in{\mathcal D}({\mathcal E}),$ and let $\K_w$ be an
$m$-simplex. Then
\begin{equation}\label{toprove}
\int_{\K}\langle \nabla (f\circ \phi_w), Z \,\nabla (f\circ
\phi_w) \rangle d\nu= L^{-m(d_w-d)} \int_{\K_w} \langle \nabla f,
Z \,\nabla f \rangle d\nu
\end{equation}
\end{lem}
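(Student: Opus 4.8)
The plan is to read both sides of \pref{toprove} as masses of energy measures and then invoke the resistance scaling $\rho=L^{d_w-d}$. For the left-hand side I would apply the fundamental identity \pref{grad-form} to the function $g=f\circ\phi_w\in{\mathcal D}$, which gives at once
\[
\int_{\K}\langle \nabla (f\circ \phi_w), Z\,\nabla (f\circ \phi_w)\rangle\, d\nu = {\mathcal E}(f\circ\phi_w,f\circ\phi_w).
\]
By the definition \pref{en-meas-def} of the energy measure, ${\mathcal E}(f\circ\phi_w,f\circ\phi_w)=\rho^{-m}\nu_f(\K_w)$. Hence, since $\rho=L^{d_w-d}$, the lemma is \emph{equivalent} to the local form of \pref{grad-form},
\[
\nu_f(\K_w)=\int_{\K_w}\langle \nabla f, Z\,\nabla f\rangle\, d\nu ,
\]
i.e. to the statement that $\langle \nabla f, Z\,\nabla f\rangle$ is the Radon--Nikodym density of $\nu_f$ with respect to $\nu$ \emph{on each simplex}, not merely on all of $\K$. (If one takes the full Kusuoka measure-density result as given, this is immediate; from the stated global identity \pref{grad-form} alone, the localization is the entire content of the lemma and must be proved.)

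To establish the local identity I would change variables under the contraction $\phi_w:\K\to\K_w$, using three self-similarity rules read off from Section \ref{grad-nest}. Writing a nonlattice $y=\phi_w(x)\in\K_w$ with address $wu$ (so $x$ has address $u$), the factorization $\widetilde{M}_{[wu]_{m+k}}=\widetilde{M}_{[u]_k}\widetilde{M}_w$ together with $\phi_{[wu]_{m+k}}=\phi_w\circ\phi_{[u]_k}$ yields, after letting $k\to\infty$, the gradient rule
\[
\nabla(f\circ\phi_w)(x)=\widetilde{M}_w\,\nabla f(\phi_w(x)).
\]
The same factorization, inserted into \pref{labelek} and combined with the martingale convergence $Z_k(x)\to Z(x)$, gives the transformation of the limit matrix,
\[
Z(\phi_w(x))=\frac{\widetilde{M}_w^{*}\,Z(x)\,\widetilde{M}_w}{\mathrm{Tr}\,\bigl(\widetilde{M}_w^{*}\,Z(x)\,\widetilde{M}_w\bigr)} .
\]
Finally, from $\nu(\K_{wu})=\rho^{m+k}\,\mathrm{Tr}\,\bigl(\widetilde{M}_w^{*}\widetilde{M}_{[u]_k}^{*}\widetilde{M}_{[u]_k}\widetilde{M}_w\bigr)$ and $\nu(\K_u)=\rho^{k}\,\mathrm{Tr}\,\bigl(\widetilde{M}_{[u]_k}^{*}\widetilde{M}_{[u]_k}\bigr)$, dividing and passing to the limit gives the measure rule: for Borel $g\ge 0$,
\[
\int_{\K_w} g\, d\nu=\rho^m\int_{\K} g(\phi_w(x))\,\mathrm{Tr}\,\bigl(\widetilde{M}_w^{*}\,Z(x)\,\widetilde{M}_w\bigr)\, d\nu(x).
\]

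With these three rules the computation is mechanical: taking $g=\langle \nabla f, Z\,\nabla f\rangle$, the trace factor produced by the measure rule cancels the denominator carried by $Z(\phi_w(x))$ in the $Z$-rule, and the surviving quadratic form $\langle \nabla f(\phi_w(x)),\widetilde{M}_w^{*}Z(x)\widetilde{M}_w\,\nabla f(\phi_w(x))\rangle$ collapses, via the gradient rule, to $\langle \nabla(f\circ\phi_w)(x),Z(x)\nabla(f\circ\phi_w)(x)\rangle$. This produces
\[
\int_{\K_w}\langle \nabla f, Z\,\nabla f\rangle\,d\nu=\rho^m\int_{\K}\langle \nabla(f\circ\phi_w),Z\,\nabla(f\circ\phi_w)\rangle\,d\nu,
\]
which is exactly \pref{toprove} after substituting $\rho=L^{d_w-d}$.

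The main obstacle is the measure rule together with the possible degeneracy of $\widetilde{M}_w$ hidden in all three rules: the gradient rule inverts $\widetilde{M}_w$ and the $Z$-rule divides by $\mathrm{Tr}\,(\widetilde{M}_w^{*}Z(x)\widetilde{M}_w)$, so one must restrict to the set of full $\nu$-measure on which $Z_m$ converges and these quantities are nonsingular, and control the exceptional set where the convention $Z_m=0$ of \pref{labelek} applies. A secondary subtlety is that $\nabla f$ in \pref{grad-form} denotes the $\nu$-a.e. object $Y(\cdot,f)$ rather than a genuine pointwise limit, so the three rules must be justified $\nu$-almost everywhere; this suffices, since only integrals against $\nu$ enter. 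The passage from the values $\nu(\K_{wu})/\nu(\K_u)$ to a genuine density (the measure rule) is a standard martingale/differentiation-of-measures argument, but it is where the real analytic care is needed.
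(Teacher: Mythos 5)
Your proposal is correct in substance, but it takes a genuinely different route from the paper's proof. The paper argues by harmonic approximation in three steps: for $m$-harmonic $f$ the gradient $\nabla f=\nabla_m f$ exists pointwise and is constant on $m$-simplices, so $\int_{\K_w}\langle \nabla_m f, Z_n\,\nabla_m f\rangle\,d\nu$ can be computed exactly (since $Z_n$ is constant on $n$-simplices, using the trace formula for $\nu$ and the scaling of $\mathcal E$), with Scheff\'e's theorem justifying $n\to\infty$; then for $n$-harmonic $f$ by decomposing $\K_w$ into $(n-m)$-level simplices; finally for general $f\in{\mathcal D}({\mathcal E})$ by passing to the limit along $H_nf$, using $\nabla_nf\to\nabla f$ in the seminorm $\bigl(\int_\K\langle\cdot,Z\,\cdot\rangle d\nu\bigr)^{1/2}$. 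You instead reduce the lemma via \pref{grad-form} and \pref{en-meas-def} to the localization $\nu_f(\K_w)=\int_{\K_w}\grad d\nu$ and prove it by a change of variables under $\phi_w$, built on transformation rules for $\nabla$, $Z$ and the pushforward of $\nu$; your identification of the pushforward density as a bounded martingale limit is the right mechanism (boundedness of $\mathrm{Tr}(\widetilde M_w^*Z_k\widetilde M_w)$ gives absolute continuity and $L^1$ convergence), and the degenerate set where $\mathrm{Tr}(\widetilde M_w^*Z\widetilde M_w)=0$ contributes zero to both sides, so it is harmless. Your route yields, as a byproduct, explicit self-similarity rules for the Kusuoka measure and for $Z$, which are of independent interest; the paper's route confines all almost-everywhere subtleties to harmonic functions, where gradients exist pointwise. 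One caution: your gradient rule cannot literally be obtained by ``letting $k\to\infty$'' pointwise, since for general $f$ the pointwise convergence of $\nabla_{m+k}f$ is unknown; it must be read as an identity between the a.e.\ objects, deduced from the exact finite-level identity $\nabla_k(f\circ\phi_w)=\widetilde M_w\,(\nabla_{m+k}f)\circ\phi_w$ by transferring the seminorm convergence $\nabla_{m+k}f\to\nabla f$ through your measure rule and $Z$-rule --- which is precisely the kind of care you flagged, so the plan does close.
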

\begin{rem}
The right hand side of \eqref{toprove} is well-defined since $
\langle \nabla f, Z \,\nabla f \rangle$ exists $\nu$-a.e. and
$\int_\K  \langle \nabla f, Z \,\nabla f \rangle d\nu<\infty, $
see Theorem 4 of \cite{Tep}.
\end{rem}
\begin{rem}
While $\nu(\K_w)$ depends in general on $w$, the scaling factor on
the right hand side of \eqref{toprove} depends only on $m=|w|$.
Thus, the lemma is not tantamount to a simple change of variables
but reflects an interplay between $\nabla f$ and $Z$.
\end{rem}
\begin{proof}
\noindent\textsc{Step 1.} Assume that $f$ is $m$-harmonic. Then
$\nabla f(y)$ exists at all nonlattice points $y$ and $\nabla f
(y)=\nabla_m f(y).$ Observe that $\nabla_mf(\cdot)$ is constant
($\nu$-a.e.) inside each $m$-simplex $\K_w$ and that it differs
there from $M_w^{-1}f(\cdot)$ by a constant only. It follows that
\begin{eqnarray*}
\int_{\K_w}\grad d\nu = \int_{\K_w}\langle\nabla_m
f,Z\,\nabla_mf\rangle\,d\nu= \lim_{n\to\infty} \int_{\K_w}\langle
\nabla_mf,Z_n \nabla_mf \rangle d\nu.
\end{eqnarray*}
To justify the last statement, observe that the random variables
$X_n=\langle\nabla_m f,Z_n\nabla_mf\rangle$ converge to
$X=\langle\nabla_m f,Z\nabla_mf\rangle$ in $L^1(\K,d\nu).$ This is
so because $X_n\geq 0,$ $X_n\to X$ in measure $\nu$ and
\[\int_\K X_n\,d\nu =\int_\K \langle\nabla_m
f,Z_n\nabla_mf\rangle\, d\nu={\cal E}(H_nf,H_nf)\to {\cal
E}(f,f)=\int_K X\,d\nu.\] The convergence in $L^1(\K,d\nu)$
follows then from Scheff\'{e}'s theorem.

For short, let us write $F=\nabla_m f\in\widetilde{\mathcal H}.$
Let $n>m$ be fixed, and let
${\underline{i}}=(i_{m+1},\ldots,i_n)\in\W_{n-m}$ so that
$w{\underline{i}}\in\W_n$. $Z_n$ is constant on $n$-simplices and,
once $n>m,$ we have
\begin{eqnarray*}
\int_{K_w} \langle F, Z_n\,F\rangle \,d\nu &=&
\sum_{|{\underline{i}}|=n-m}\int_{\K_{w{\underline{i}}}} \langle
F, Z_n\,F\rangle \,d\nu
\\ &=& \sum_{|{\underline{i}}|=n-m} \frac{\|\widetilde{M}_{w{\underline{i}}}F\|^2}{\text{Tr}
(\widetilde{M}^*_{w{\underline{i}}} \widetilde
M_{w{\underline{i}}})} \cdot L^{n(d_w-d)}
\mbox{Tr}(\widetilde{M}^*_{w{\underline{i}}} \widetilde
M_{w{\underline{i}}})
\\&=&
L^{n(d_w-d)}
\sum_{|{\underline{i}}|=n-m}\|\widetilde{M}_{w{\underline{i}}}F\|^2
\\&=&
L^{n(d_w-d)}\sum_{|{\underline{i}}|=n-m}{\mathcal E}(F\circ
\phi_{w{\underline{i}}}, F\circ \phi_{w{\underline{i}}}).
\end{eqnarray*}
From the scaling property of $\cal E$,
\begin{eqnarray*}
\sum_{|{\underline{i}}|=n-m}\E(F\circ \phi_{w{\underline{i}}},
F\circ \phi_{w{\underline{i}}}) &=&
\sum_{|{\underline{i}}|=n-m}  \E((F\circ\phi_w)\circ \phi_{{\underline{i}}}, (F\circ \phi_w)\circ \phi_{{\underline{i}}}) \\
&=& L^{-(n-m)(d_w-d)}{\mathcal E}(F\circ\phi_w,F\circ\phi_w).
\end{eqnarray*}
We know that $F\circ\phi_w$ and $f\circ\phi_w$ differ by a
constant only, so that
\[{\mathcal E}(F\circ\phi_w,F\circ\phi_w)={\mathcal E}(f\circ\phi_w,f\circ\phi_w).\]
Piecing everything together, we obtain
\begin{eqnarray*}
\int_{\K_w}\langle F, Z_n  F\rangle\,d\nu &=& L^{m(d_w-d)}{\mathcal E}(f\circ\phi_w,f\circ\phi_w)\\
&=&L^{m(d_w-d)}\int_{\K} \langle \nabla(f\circ\phi_w),
Z\,\nabla(f\circ\phi_w)\rangle d\nu.
\end{eqnarray*}
The right-hand side does not depend on $n,$ thus we can pass with
$n$ to infinity, obtaining \pref{toprove}.

\medskip

 \noindent \textsc{Step 2.} Let now $f$ be $n$-harmonic,
with $n>m.$ Then
$\K_w=\bigcup_{|{\underline{i}}|=n-m}\K_{w{\underline{i}}}$ and
\begin{equation}\label{pojutrze}
\int_{\K_w}\grad
d\nu=\sum_{|{\underline{i}}|=n-m}\int_{\K_{w{\underline{i}}}}\grad
d\nu.
\end{equation}
To each of the integrals on the right-hand side of \pref{pojutrze}
we apply Step 1, obtaining
\begin{eqnarray*}
\pref{pojutrze}&{=}& L^{n(d_w-d)}\sum_{|{\underline{i}}|=n-m}
\int_\K\langle\nabla(f\circ\phi_{w{\underline{i}}}), Z\nabla(f\circ\phi_{w{\underline{i}}})\rangle d\nu\\
&=& L^{m(d_w-d)}\sum_{|{\underline{i}}|=n-m}L^{(n-m)(d_w-d)}
\E((f\circ\phi_w)\circ\phi_{{\underline{i}}},
(f\circ\phi_w)\circ\phi_{{\underline{i}}}),
\end{eqnarray*}
which is, from the scaling property, equal to
\[L^{m(d_w-d)}{\mathcal E}(f\circ\phi_w. f\circ\phi_w)= L^{m(d_w-d)}\int_\K\langle\nabla(f\circ\phi_w), Z\,\nabla(f\circ\phi_w)\rangle\,d\nu.\]

\noindent\textsc{Step 3}. Let now $f$ be any function from
${\mathcal D}({\mathcal E}).$ Then
\[{\mathcal E}(f,f)=\lim_{n\to\infty}{\mathcal E}(H_nf, H_nf)\]
and
\begin{equation}\label{pojutrze1}
{\mathcal E}(f\circ\phi_w,
f\circ\phi_w)=\lim_{n\to\infty}{\mathcal E}(H_n(f\circ\phi_w),
H_n(f\circ\phi_w)).
\end{equation}
From Step 2 we have: for $n\geq m,$
\begin{equation}\label{liii}
\int_\K\langle\nabla_n(H_n f\circ\phi_w), Z
\,\nabla_n(H_nf\circ\phi_w)\rangle\,d\nu
=L^{-m(d_w-d)}\int_{\K_w}\langle\nabla(H_nf),Z\,\nabla(H_nf)\rangle\,d\nu,
\end{equation}
and the assertion follows from the limiting procedure:  the
left-hand side of \pref{liii} is equal to ${\mathcal
E}(H_nf\circ\phi_w,H_nf\circ\phi_w)
\stackrel{n\to\infty}{\longrightarrow}{\mathcal E}(f\circ\phi_w,
f\circ\phi_w).$ As to the right-hand side, since
$\nabla(H_nf)=\nabla_nf,$ and $\nabla_nf$ converges to $\nabla f$
in the seminorm $\left(\int_\K\langle\cdot, Z\cdot\rangle
d\nu\right)^{1/2},$ we also have the convergence in the restricted
seminorm $\left(\int_{\K_w}\langle\cdot, Z\cdot\rangle
d\nu\right)^{1/2},$ which gives the desired convergence.
\end{proof}

\noindent From Proposition \ref{prop1} we derive the local
Poincar\'e inequality for nonlattice points.

\begin{theo}\label{poincpoint}  Supopose that $\K$ satisfies property {\bf (P)}.
Let  $f\in {\mathcal D}({\mathcal E})$ and  $x,y\in {\mathcal
K}\setminus V^{(\infty)}.$ Then
\[
|f(x)-f(y)|^2\leq
C\rho(x,y)^{d_w}\frac{1}{\mu(S(x,y))}\int_{S(x,y)} \grad d\nu.
\]
where $S(x,y)$ was introduced in Definition \ref{sxy} (6).
\end{theo}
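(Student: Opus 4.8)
The plan is to reduce the statement to the lattice-point estimate of Proposition \ref{prop1} by a chaining argument. Put $n=\ind(x,y)$ and write $I=\int_{S(x,y)}\grad\,d\nu$ for brevity. The idea is to connect $x$ to a vertex of $\Delta_{n-1}(x)$, connect $y$ to a vertex of $\Delta_{n-1}(y)$, and then join those two vertices across the (coinciding or adjacent) simplices making up $S(x,y)$, estimating each link by Proposition \ref{prop1} and summing geometrically.

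First I would control the descent from $x$ to the boundary. For each $m\geq n-1$ pick a vertex $q_m\in V(\Delta_m(x))$. Since $q_m,q_{m+1}$ both lie in $V^{(m+1)}\cap\Delta_m(x)=\phi_{[w]_m}(V^{(1)})$ and the graph $(V^{(1)},E_{(1)})$ is connected, self-similarity shows that $q_m$ and $q_{m+1}$ are joined by a path of $(m+1)$-neighbors inside $\Delta_m(x)$ of length at most a constant $D=D(\K)$. Each edge sits in an $(m+1)$-simplex contained in $\Delta_m(x)$ (of diameter $L^{-(m+1)}$), so applying Proposition \ref{prop1} to every edge and using $(\sum a_i)^2\le D\sum a_i^2$ gives
\[
|f(q_m)-f(q_{m+1})|^2\leq C\,L^{-(m+1)(d_w-d)}\int_{\Delta_m(x)}\grad\,d\nu\leq C\,L^{-(m+1)(d_w-d)}\,I,
\]
the last step because $\Delta_m(x)\subseteq\Delta_{n-1}(x)\subseteq S(x,y)$. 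As $d_w-d=\log\rho/\log L>0$, the weights $L^{-(m+1)(d_w-d)/2}$ form a convergent geometric series, and since $q_m\to x$ (because $\diam\Delta_m(x)=L^{-m}\to0$) continuity of $f$ yields
\[
|f(x)-f(q_{n-1})|\leq\sum_{m\geq n-1}|f(q_{m+1})-f(q_m)|\leq C\,I^{1/2}\,L^{-n(d_w-d)/2}.
\]

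The same estimate gives $|f(y)-f(p_{n-1})|^2\leq C\,L^{-n(d_w-d)}I$ for a chosen $p_{n-1}\in V(\Delta_{n-1}(y))$. It remains to connect $q_{n-1}$ to $p_{n-1}$: since $\ind(x,y)=n$, the simplices $\Delta_{n-1}(x)$ and $\Delta_{n-1}(y)$ either coincide or meet at a single common vertex $z$, so $q_{n-1}$ and $p_{n-1}$ are linked by at most two $(n-1)$-neighbor steps, and Proposition \ref{prop1} gives $|f(q_{n-1})-f(p_{n-1})|^2\leq C\,L^{-(n-1)(d_w-d)}I$. Combining the three pieces via the triangle inequality and $(a+b+c)^2\le 3(a^2+b^2+c^2)$ produces $|f(x)-f(y)|^2\leq C\,L^{-n(d_w-d)}I$. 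Finally I would translate the scale: by Property {\bf (P)} and Lemma \ref{ind-n}, $\rho(x,y)\asymp L^{-n}$, hence $\rho(x,y)^{d_w}\asymp L^{-nd_w}$; and since $S(x,y)$ is a union of one or two $(n-1)$-simplices, each of $\mu$-measure $M^{-(n-1)}=L^{-(n-1)d}$, one has $\mu(S(x,y))\asymp L^{-nd}$. Thus $\rho(x,y)^{d_w}/\mu(S(x,y))\asymp L^{-n(d_w-d)}$, which converts the bound into the asserted inequality.

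The main obstacle is the descent step. One must simultaneously keep the number of connecting $(m+1)$-neighbor steps uniformly bounded — this is exactly where connectivity of $(V^{(1)},E_{(1)})$ together with self-similarity enters — and ensure the energies add up correctly. The crucial observation is that each $\int_{\Delta_m(x)}\grad\,d\nu$ is dominated by the single integral $I$ over $S(x,y)$, so that the geometric decay $L^{-(m+1)(d_w-d)/2}$, available precisely because $\rho>1$ forces $d_w>d$, makes the telescoped sum collapse to its first term and leaves only the integral over $S(x,y)$ rather than an uncontrolled sum of sub-simplex energies.
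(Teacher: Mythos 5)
Your proof is correct and follows essentially the same route as the paper's: chaining through vertices of nested simplices, estimating each link by Proposition \ref{prop1}, summing the resulting geometric series (which converges precisely because $d_w>d$), and converting $L^{-n(d_w-d)}$ into $\rho(x,y)^{d_w}/\mu(S(x,y))$ via Lemma \ref{ind-n} and Property {\bf (P)}. The only difference is organizational: the paper chains from a vertex of a simplex to an interior point with one step per level (Steps 1--3), whereas you run the descent from the interior point outward and explicitly justify the uniformly bounded number of $(m+1)$-neighbor steps per level through connectivity of $(V^{(1)},E_{(1)})$ --- a slightly more careful rendering of the same argument.
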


\begin{proof}
\textsc{Step 1.} Suppose $z\in V^{(m)}$ is a vertex of $\Delta\in
{\mathcal  T}_m$ and let $y\in \mbox{Int}\,\Delta.$ Then one finds
a chain $z=z_0,z_1,...,z_k\to y$ such that for all $k=1,2,...$ the
points $z_{k-1}$ and $z_k$ are $(m+k)$-neighbors. Denote by
$\Delta(z_{k-1}, z_k)$ the  $(m+k)$-simplex they belong to. From
Proposition \ref{prop1} we have, since $\Delta(z_{k+1},
z_k)\subset\Delta,$
\begin{eqnarray*}
|f(z_{k-1})-f(z_k)|^2&\leq & C
\left(\mbox{diam}\,\Delta(z_{k-1},z_k)\right)^{d_w-d}\int_{\Delta(z_{k-1},z_k)}
\langle \nabla f, Z\, \nabla f\rangle d\nu\\
&\leq &
 C \left(\mbox{diam}\,\Delta(z_{k-1},z_k)\right)^{d_w-d} \int_{\Delta} \langle
\nabla f, Z\, \nabla f\rangle d\nu.
\end{eqnarray*}

Since $f$ is continuous, summing over $k$ we obtain
\begin{eqnarray*}
|f(z)-f(y)| & \leq & \sum_{k=1}^\infty |f(z_{k-1})-f(z_k)|\\
&\leq & \sum_{k=1}^\infty
\left(\mbox{diam}\,\Delta(z_{k-1},z_k)\right)^{\frac{d_w-d}{2}}\left(\int_\Delta
\langle \nabla f, Z
\nabla f\rangle d\nu\right)^{1/2}\\
&\leq &\sum_{k=1}^\infty L^{-\frac{m+k}{2}(d_w-d)}
\left(\int_\Delta \langle \nabla
f, Z\, \nabla f\rangle d\nu\right)^{1/2}\\
&=&
CL^{-\frac{m(d_w-d)}{2}}\left(\int_\Delta\grad\,d\nu\right)^{1/2}
\end{eqnarray*}
and consequently
\begin{equation}\label{hihi}
|f(z)-f(y)|^2\leq C L^{m(d-d_w)}\int_\Delta \langle\nabla f,
Z\,\nabla f\rangle d\nu.
\end{equation}
\medskip

\noindent\textsc{Step 2.} Suppose $x,y$ belong to a common
$m$-simplex $\Delta$. Then choose a vertex $v\in V(\Delta),$ write
$|f(x)-f(y)|^2\leq 2(|f(x)-f(v)|^2+|f(v)-f(y)|^2),$ and apply Step
1 in order to get \pref{hihi} for $x$ and $y.$

\medskip

\noindent \textsc{Step 3.} The result of Step 2 extends
immediately to the case when $x,y$ belong to two adjacent
$m$-simplices: when $x\in \Delta_1\in{\mathcal T}_m,$
$y\in\Delta_2\in{\mathcal T}_m$ and $\Delta_1, \Delta_2$ are
adjacent, then $\Delta_1$ and $\Delta_2$ share a vertex $z\in
V^{(m)}.$ One applies Step 1 to the pair $(x,z)$ and then to
$(y,z),$ getting
\begin{equation}\label{step3}
|f(x)-f(y)|^2\leq C L^{m(d-d_w)}\int_{\Delta_1\cup
\Delta_2}\langle \nabla f, Z\, \nabla f\rangle d\nu.
\end{equation}

\medskip

\noindent\textsc{Step 4.} Now take any $x,y\in \K\setminus
V^{(\infty)}$. Let $\mbox{ind}\,(x,y)=m.$ Then $S(x,y) =
\Delta_{m-1}(x)\cup\Delta_{m-1}(y)$ is composed either of a common
$(m-1)$-simplex or two adjacent $(m-1)$-simplices. In the first
case, apply   Step 2, in the latter case -- Step 3. In either
case, $\mu(S(x,y)) \asymp L^{-(m-1)d}$ and $\rho(x,y)\asymp
L^{-m},$ so the theorem is proven.
\end{proof}

\bigskip

{\noindent \em Proof of Theorem \ref{najwazniejsze}.} Choose
$\Delta\in {\mathcal T}_m.$  By Jensen's inequality we have
\[\barint_\Delta |f(x)-f_\Delta|d\mu(x)\leq \left(\barint_\Delta
|f(x)-f_\Delta|^2d\mu(x)\right)^{1/2},
\]
and further:
\begin{eqnarray*}
\barint_\Delta |f(x) - f_\Delta|^2 d\mu(x) & = &
\barint_\Delta |f(x) -\barint_\Delta f(y) d\mu(y)|^2 d\mu(x) \\
&=&
\barint_\Delta | \barint_\Delta (f(x) - f(y)) d\mu(y) |^2 d\mu(x) \\
&\le &
\barint_\Delta\barint_\Delta |f(x) - f(y)|^2 d\mu(y) d\mu(x) \\
&= & \frac1{\mu(\Delta)^2} \int_\Delta\int_\Delta |f(x) - f(y)|^2
d\mu(y) d\mu(x).
\end{eqnarray*}
Points $x$ and $y$ under the integral belong to a common
$m$-simplex $\Delta$, and so $\ind(x,y)> m$ (without loss of
generality we can and do assume that $x,y$ are nonvertex points).
Using Lemma \ref{ind-n}, we split the inner integral as follows.
\begin{eqnarray}\label{pp}
& &
\int_\Delta |f(x) - f(y)|^2 d\mu(y) \nonumber\\
& = & \sum_{n=m+1}^\infty \int_{\{y\in\Delta:\, \ind\,(x,y)=n \}}
|f(x) - f(y)|^2 d\mu(y)
\nonumber\\
& = & \sum_{n=m+1}^\infty \int_{(\Delta^*_{n-1}(x) \setminus
\Delta^*_n(x) )\cap \Delta} |f(x) - f(y)|^2 d\mu(y).
\end{eqnarray}
When $\ind(x,y)=n,$ then $\rho(x,y)\asymp L^{-n}$ and moreover
there exist two adjacent \mbox{$(n-1)$-simplices}, say $S$ and
$T,$ such that
 $x\in S,$ $y\in T$ $(S=T$ is permitted).

Let $v\in V^{(n-1)}$ be a common vertex of $S$ and $T.$ Then,
according to \pref{step3} (which is true without property {\bf
(P)} as well)
\begin{eqnarray*}
|f(x)-f(y)|^2 & \leq & CL^{-n(d_w-d)}\int_{S\cup T}\grad\,d\nu\\
&\leq & CL^{-n(d_w-d)}\int_{\Delta_{n-1}^*(x)}\grad\,d\nu.
\end{eqnarray*}

As $\mu(\Delta\cap(\Delta_{n-1}^*(x)\setminus \Delta^*_n(x)))\leq
\mu(\Delta_{n-1}^*(x)) \asymp L^{-nd}$, each of the integrals in
\pref{pp} is bounded by
\[CL^{-nd_w}\int_{\Delta_{n-1}^*(x)}\grad\,d\nu.\]

Consequently,
\begin{eqnarray}\label{here}
\int_\Delta \int_\Delta |f(x) - f(y)|^2 d\mu(y)d\mu(x) &\le& C
\sum_{n=m+1}^\infty L^{-nd_w} \int_\Delta \int_{\Delta^*_{n-1}(x)}
\grad d\nu d\mu(x).\nonumber\\
\end{eqnarray}

Let $w\in\W_m$ be such that $\Delta=\phi_w(\K)$ and for
$\underline{i}\in W_{n-1-m}$ set $\Delta_{\underline{i}} =
\phi_{w{\underline{i}}}(\K)\subset \Delta$. Observe that on each
$\Delta_{\underline{i}}$ the mapping $x\mapsto \Delta^*_{n-1}(x)$
is constant and equal to $\Delta^*_{\underline{i}}$. It follows
\begin{eqnarray}\label{ppp}
& & \int_\Delta \int_{\Delta^*_{n-1}(x)} \grad d\nu d\mu(x) \nonumber \\
&=&
\sum_{{\underline{i}}\in\W_{n-1-m}} \int_{\Delta_{\underline{i}}}\int_{\Delta^*_{\underline{i}}}\grad d\nu d\mu(x)\nonumber \\
&=& \sum_{{\underline{i}}\in\W_{n-1-m}}
\int_{\Delta^*_{\underline{i}}}\grad d\nu\mu(\Delta_{\underline{i}})\nonumber\\
&\leq & C \sum_{{\underline{i}}\in\W_{n-1-m}}
L^{-nd}\,\int_{\Delta^*_{\underline{i}}}\grad d\nu .
\end{eqnarray}

Sets $\Delta_{\underline{i}}^*$ are not pairwise disjoint, but
each of them is consists of at most $M+1$ simplices from
$\mathcal{T}_{n-1}$. Therefore, if in \eqref{ppp} we decompose
each of the integrals over $\Delta_{\underline{i}}^*$ into a
number of integrals over corresponding $(n-1)$-simplices, then
each of these $(n-1)$-simplices will appear at most $M+1$ times in
the sum. Furthermore, since for any ${\underline{i}}\in\W_{n-1-m}
$  one has $\Delta_{\underline{i}}^*\subset \Delta^*$, and
$\bigcup_{{\underline{i}}\in\W_{n-1-m}}\Delta_{\underline{i}}=\Delta,$
it follows that
\begin{equation}\label{pppi}
\sum_{i\in\W_{n-1-m}} \int_{\Delta^*_i}\grad d\nu \leq
C\int_{\Delta^*} \grad d\nu.
\end{equation}
Collecting \pref{here}, \pref{ppp}, \pref{pppi} we obtain
\begin{eqnarray*}
\int_\Delta\int_\Delta|f(x)-f(y)|^2d\mu(y)d\mu(x)&\leq& C
\sum_{n=m+1}^{\infty} L^{-nd_w}L^{-nd}\int_{\Delta^*}\grad d\nu\\
&= & C L^{-md_w}L^{-md}\int_{\Delta^*}\grad d\nu.
\end{eqnarray*}
To complete the proof, observe again that $L^{-md}=c \mu(\Delta).$
\phantom{} \hfill $\Box$

\medskip

Below we derive a Poincar\'e inequality that uses balls instead of
simplices. This statement requires property {\bf (P)} and will be
used throughout  for the results of next section.

\begin{theo}\label{poinc-balls} Suppose that $\K$ satisfies {\bf
(P)}. Suppose $f\in \mathcal{D}(\E)$. Let $x_0\in\K\setminus
\vinf$ be a nonvertex point and let $r>0$ be given.  Denote
$B=B(x_0,r) = \{ y\in\K: \rho(x_0,y)\le r\}$. Then there exist
$C>0$ and $A\geq 1$ (independent of $x_0$ and
 $r$) such that
\begin{equation}\label{eq-poinc-balls}
\barint_B |f-f_B|d\mu \leq C
r^{\frac{d_w}{2}}\left(\frac{1}{r^{d}}\int_{B(x_0, Ar)}\grad
d\nu\right)^{1/2}.
\end{equation}
\end{theo}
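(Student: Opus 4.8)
The plan is to deduce the ball inequality \eqref{eq-poinc-balls} from the simplex machinery already in place, using property {\bf (P)} only to pass between Euclidean balls and simplex neighborhoods.

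First I would fix the scale. Since $x_0$ is a nonvertex point, the simplices $\Delta_m(x_0)$ are well defined, and by Lemma \ref{ind-n} (which encodes property {\bf (P)}) one has, for $y\notin\Delta_m^*(x_0)$, that $\ind(x_0,y)\le m$ and hence $\rho(x_0,y)\ge cL^{-m}$. Choosing $m=m(r)$ so that $cL^{-m}\ge r$ and $L^{-m}\asymp r$, I obtain simultaneously the inclusion $B=B(x_0,r)\subset\Delta_m^*(x_0)$ and, because $\diam\Delta_m^*(x_0)\asymp L^{-m}\asymp r$, the reverse-type inclusion $\Delta_m^*(x_0)\subset B(x_0,Ar)$ for a fixed $A\ge 1$ depending only on the fractal. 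Along the way I would record the volume comparison $\mu(B)\asymp L^{-md}\asymp r^d$: the upper bound holds since $\Delta_m^*(x_0)$ is a union of at most $N_0$ simplices from $\mathcal T_m$ (a bound depending only on $\K$), each of measure $L^{-md}$, while the lower bound follows because $B$ contains $\Delta_{m+k_0}(x_0)$ for a fixed $k_0$.

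Next I would reduce the oscillation of $f$ over $B$ to oscillations over $m$-simplices. Writing $\Delta_0=\Delta_m(x_0)$ and $c_0=f_{\Delta_0}$, and using $\barint_B|f-f_B|\,d\mu\le 2\barint_B|f-c_0|\,d\mu$, I would estimate $\int_B|f-c_0|\,d\mu\le\sum_j\int_{\Delta^{(j)}}|f-c_0|\,d\mu$, the sum running over the finitely many $m$-simplices $\Delta^{(j)}$ making up $\Delta_m^*(x_0)$; each such $\Delta^{(j)}$ either equals $\Delta_0$ or is adjacent to it. On a single $\Delta^{(j)}$, pick a vertex $v_j\in V^{(m)}$ shared with $\Delta_0$ and split $|f-c_0|\le|f-f(v_j)|+|f(v_j)-f_{\Delta_0}|$. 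Both pieces are controlled by the vertex-to-point estimate \eqref{hihi} from the proof of Theorem \ref{poincpoint}: integrating the uniform bound $|f(y)-f(v_j)|^2\le CL^{m(d-d_w)}\int_{\Delta^{(j)}}\grad\,d\nu$ over $y\in\Delta^{(j)}$ gives $\int_{\Delta^{(j)}}|f-f(v_j)|\,d\mu\le C\mu(\Delta^{(j)})L^{-m(d_w-d)/2}(\int_{\Delta^{(j)}}\grad\,d\nu)^{1/2}$, and the same estimate applied on $\Delta_0$ yields $|f(v_j)-f_{\Delta_0}|\le CL^{-m(d_w-d)/2}(\int_{\Delta_0}\grad\,d\nu)^{1/2}$. (Alternatively one may bound the first piece by invoking Theorem \ref{najwazniejsze} directly on $\Delta^{(j)}$.)

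Finally I would collect the terms. Dividing by $\mu(B)\asymp L^{-md}$, using $\mu(\Delta^{(j)})\asymp L^{-md}$ and that there are at most $N_0$ summands with each energy integral bounded by $\int_{\Delta_m^*(x_0)}\grad\,d\nu$, every contribution reduces to $CL^{-m(d_w-d)/2}(\int_{\Delta_m^*(x_0)}\grad\,d\nu)^{1/2}$. Since $L^{-m}\asymp r$ and $\Delta_m^*(x_0)\subset B(x_0,Ar)$, this is $\asymp Cr^{(d_w-d)/2}(\int_{B(x_0,Ar)}\grad\,d\nu)^{1/2}$, and rewriting $r^{(d_w-d)/2}=r^{d_w/2}r^{-d/2}$ gives exactly the right-hand side of \eqref{eq-poinc-balls}. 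I expect the only genuinely delicate step to be the first one: choosing $m$ and the constant $A$ so that the two opposite inclusions $B(x_0,r)\subset\Delta_m^*(x_0)\subset B(x_0,Ar)$ together with $\mu(B)\asymp r^d$ all hold uniformly in $x_0$ and $r$ — this is precisely where property {\bf (P)} is indispensable, whereas the subsequent energy bookkeeping is routine because only a fractal-dependent bounded number of simplices is ever involved.
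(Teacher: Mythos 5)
Your proof is correct, but it routes the key estimate differently from the paper. The scale-fixing step is common to both: the paper's proof also uses property {\bf (P)} exactly once, to get the sandwich $B(x_0,r)\subseteq\Delta_{n_0}^*(x_0)$ with $L^{-n_0}\asymp r$ (its inequality \eqref{popopo} is your first paragraph), and both arguments need the Ahlfors lower bound $\mu(B)\geq c r^d$ — which, incidentally, you justify explicitly via $\Delta_{m+k_0}(x_0)\subset B$, while the paper uses it tacitly. Where you diverge is in the oscillation estimate itself: the paper, having passed to the simplex neighborhood, re-runs the entire mechanism of Theorem \ref{najwazniejsze} at scale $n_0$ — Jensen to $L^2$, the double integral $\mu(B)^{-2}\int_B\int_B|f(x)-f(y)|^2\,d\mu\,d\mu$, the decomposition of the inner integral into index layers $\{\ind(x,y)=n\}=\Delta^*_{n-1}(x)\setminus\Delta^*_n(x)$ estimated via \eqref{step3}, and the geometric series with the overlap-counting argument — whereas you invoke the uniform vertex-to-point bound \eqref{hihi} (Step 1 of Theorem \ref{poincpoint}, which indeed does not require {\bf (P)}) once at the single scale $m$, on each of the boundedly many $m$-simplices of $\Delta_m^*(x_0)$, chaining through vertices shared with $\Delta_0=\Delta_m(x_0)$ to compare everything to $f_{\Delta_0}$. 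This is legitimate and in fact more economical: the multiscale summation is already packaged inside \eqref{hihi}, so it need not be repeated, and your argument yields the stronger sup-norm bound $\sup_{y\in B}|f(y)-f_{\Delta_0}|\leq C r^{(d_w-d)/2}\bigl(\int_{B(x_0,Ar)}\grad\,d\nu\bigr)^{1/2}$, hence $L^p$-Poincar\'e inequalities for every $p$, not just $p=1,2$. What the paper's longer route buys is structural uniformity — its ball proof is a verbatim adaptation of the simplex proof, staying entirely within integrated $L^2$ quantities — but given that \eqref{hihi} is available and uniform, nothing is lost in your version. (One small point both you and the paper gloss over: for $r$ comparable to $\diam\K$ the chosen scale degenerates to $m\leq 0$, but then \eqref{eq-poinc-balls} follows directly from Theorem \ref{najwazniejsze} with $m=0$ since $r^{d_w/2-d/2}\asymp 1$.)
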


\begin{proof} Only minor changes need to be introduced  in the proof
of Theorem \ref{najwazniejsze}. From property {\bf (P)}  there
exists $\alpha\in(0,1)$ such that for every nonlattice $x\in\K,$
and any $m\geq 1$
\begin{equation}\label{popopo}
B(x,\frac{\alpha}{L^m})\subseteq \Delta_m^*(x)\subseteq B(x,
\frac{2}{L^m}).
\end{equation}
Let $n_0$ be the unique integer such that
$L^{-(n_0+1)}<\frac{r}{\alpha}\leq L^{-n_0},$ so that
\[B(x,r)\subseteq B(x,\alpha L^{-n_0})\subseteq \Delta_{n_0}^*(x).\]
As before, we get
\[
\barint_B |f-f_B|^2 d\mu \leq \frac{1}{\mu(B)^2} \int_B\int_B
|f(x)-f(y)|^2d\mu(x)d\mu(y).
\]
Since $B\subset \Delta^*_{n_0}(x_0)$ and $\Delta^*_{n_0}(x_0)=
S_1\cup \ldots \cup S_K$ is the sum of a finite number of
neighboring $n_0$-simplices, we estimate the inner integral as
\begin{eqnarray}\label{iii}
\int_{\Delta_0^*}|f(y)-f(x)|^2 d\mu(x) d\mu(y) & = & \sum_i
\int_{S_i}|f(x)-f(y)|^2d\mu(y).
\end{eqnarray}
Now we work with the integral over each $S_i$ separately. Observe
that when $x,y$ are as in the integral in \pref{iii}, then
$\Delta_{n_0}(x)\cap \Delta_{n_0}(y)\neq\emptyset,$ so that
$\mbox{{\rm ind}}\,(x,y) \geq n_0+1.$ Therefore, for any
$i=1\ldots K$, we have
\begin{eqnarray*}
\int_{S_i}|f(x)-f(y)|^2 d\mu(y) &= & \sum_{n=n_0+1}^\infty
\int_{S_i\cap \{y : \mathrm{ind}(x,y)=n \}}
|f(x)-f(y)|^2d\mu(y)\\
&=& \sum_{n=n_0}^\infty \int_{S_i\cap(\Delta^*_{n}(x)\setminus
\Delta_{n+1}^*(x))}
|f(x)-f(y)|^2 d\mu(y)\\
&\leq& c \sum_{n=n_0}^\infty L^{-nd_w}\int_{S_i\cap
\Delta^*_n(x)}\grad d\nu.
\end{eqnarray*}

From now on we proceed identically as in the proof of
\pref{poincfrac}, ending up with
\begin{eqnarray*}
\int_B\int_B |f-f_B|^2 d\mu d\mu
 & \leq & \int_B \sum_i\int_{S_i}
|f(x)-f(y)|^2d\mu(y)d\mu(x)\\
&\leq &  L^{-n_0d_w}L^{-n_0 d+f} \sum_i \int_{S_i^*} \grad d\nu\\
& \leq &
c L^{-n_0d}L^{-n_0 d}\int_{B(x_0,\frac{2L}{\alpha}r)} \grad d\nu \\
&\leq & c  r^{d_w} \frac{1}{\mu(B(x_0,
\frac{2L}{\alpha}r)}\int_{B(x_0, \frac{2L}{\alpha}r)}\grad d\nu,
\end{eqnarray*}
where we have used the inclusions
$S_i^*\subseteq\Delta_{n_0}^*\subseteq B(x_0,2L^{-n_0})\subseteq
B(x_0,\frac{2L}{\alpha}r)$. Set $A=\frac{2L}{\alpha}$. The proof
is complete.
\end{proof}

\section{Sobolev spaces on fractals}
On metric spaces, several definitions of Sobolev-type spaces are
possible (see e.g. \cite{FHK}, \cite{Hajl-met}, \cite{KS}). We
recall some of them below. Their mutual relations and connections
with the Poincar\'e inequality form now a well established theory
(\cite{Haj-Kos}, \cite{Hajl-met}). Below, we briefly recall the
relevant definitions.

Suppose $(X,\rho,\mu)$ is a  metric measure space, where $\mu$ is
a doubling Radon measure on a metric space $(X,\rho).$ Any nested
fractal $\mathcal K$ fits into this definition, with $\mu$ not
only doubling but even Ahlfors regular. In the following
definitions of Sobolev-type spaces we suppose $p\geq 1$.
\begin{enumerate}
\item The Hajlasz-Sobolev spaces $M^{1,p}(X)$ consists of those
functions $f\in L^p(X),$ for which there exists a function $g\in
L^p(X),$ $g\geq 0$ such that
\begin{equation}\label{hajl-sob}
|f(x)-f(y)|\leq C \rho(x,y) (g(x)+g(y))
\end{equation}
for $\mu$-almost all $x,y\in X.$

\item The space ${\mathcal P}^{1,p}(X)$ consists of those functions $f\in
L^1_{loc}(X)$, for which there exist $\sigma\ge 1$ and $g\in
L^p(X)$ such that for every ball $B=B(x,r)$
\begin{equation}\label{poinc-sob}
\barint_B|f-f_B|d\mu \leq r\left(\barint_{B(x,\sigma r)}
g^pd\mu\right)^{1/p}.
\end{equation}

\item The Korevaar-Schoen Sobolev space, $KS^{1,p}(X)$ consists of
those functions \linebreak $f\in L^p(X)$ for which
\begin{equation*}
\limsup_{\epsilon\to 0} \int_X \barint_{B(x,\epsilon)}
\frac{|f(x)-f(y)|^p}{\epsilon^p}\,d\mu(x)d\mu(y)<\infty.
\end{equation*}
\end{enumerate}
One considers also the Newtonian spaces $N^{1,p}(X)$.  The {\em
upper gradient} those spaces are based on   involves  integrals
over rectifiable curves. On nested fractals,  the family of
rectifiable curves might be empty or not rich enough to yield a
non-degenerate object.

In general, the inclusions $M^{1,p}(X)\subset {\mathcal
P}^{1,p}(X)\subset KS^{1,p}(X)$ hold true, but not always they can
be reversed. In some cases however  -- for example in
$\mathbb{R}^d$ -- all  three definitions yield the same function
spaces. We refer to \cite{Kos-MMan} and \cite{Hajl-met} for more
details.

We are now going to adapt definitions of the spaces $M^{1,p}$,
${\mathcal P}^{1,p}$ and $KS^{1,p}$ to the fractal setting. As we
have already mentioned in the Introduction, the scale $r$ is not a
natural scale here, and it will be replaced by
$r^{\frac{d_w}{2}}.$ Let us mention that in many cases (the
Euclidean spaces, some manifolds) the walk dimension $d_w$, read
off from the heat kernel estimates on the underlying space, is
equal to 2, so that the scale $r^{\frac{d_w}{2}}$ is just $r.$

\begin{defi}\label{sob-fractals} Let $\mathcal K$ be the nested fractal
defined in Section \ref{nested fractals}; let $p\geq 1$ and
$\sigma>0$ be given. Recall that $\mu$ denotes the normalized
$d$-dimensional Hausdorff measure on $\K$ and $\nu$ -- the Kusuoka
measure. We say that a function $f\in L^p(\K,\mu)$ belongs to:
\begin{itemize}
\item[--] the space $M^{1,p}_\sigma (\K,\mu),$ when there exists a
nonnegative function $g\in L^p(\K,\mu)$ such that for $\mu$-a.e.
$x,y\in \K$,
\begin{equation}\label{hsob-frac}
|f(x)-f(y)|\leq \rho(x,y)^\sigma (g(x)+g(y));
\end{equation}

\item[--] the space ${\mathcal P}^{1,p}_\sigma(\K)$, when there
exists a nonnegative function $g\in L^p(\K,\nu)$ such that for any
$x\in \K$ and $0<r<\diam \K$,
\begin{equation}\label{poinc-sob-frac} \barint_{B(x,r)} |f-f_{B(x,r)}|
d\mu\leq r^\sigma \left(\frac{1}{\mu(B(x,Ar)}\int_{B(x, Ar)
}g^pd\nu\right)^{1/p},
\end{equation}
with some $A\geq 1$; the inequality \eqref{poinc-sob-frac} will be
called the $(1,p,\sigma)-$Poincar\'e inequality;

\item[--] the space $KS^{1,p}_\sigma(\K),$ when
\begin{equation*}
\limsup_{\epsilon\to 0} \int_\K \barint_{B(x,\epsilon)}
\frac{|f(x)-f(y)|^p}{\epsilon^{p\sigma}}\,d\mu(x)d\mu(y)<\infty,
\end{equation*}
\item[--] the Besov-Lipschitz space $Lip(\sigma,p,\infty)$, $\sigma>0$
(see \cite{Jon1}), if
\[\|f\|_{Lip}=\sup_{m\geq 0} a_m^{(p)}(f)<\infty,\]
where
\[
a_m^{(p)}(f)=L^{m\sigma}\left( L^{md} \int\int_{\rho(x,y)\leq
\frac{c_0}{L^m}}|f(x)-f(y)|^pd\mu(x)d\mu(y)\right)^{1/p},
\]
with some $c_0>0$. Note that different values of this constant
yield the same function space with equivalent norms.
\end{itemize}
\end{defi}
It is immediate to see that the spaces $Lip(\sigma,p,\infty)(\K)$
and $KS_\sigma^{1,p}(\K)$ coincide and that their norms are
equivalent.

We now turn to relations between the Poincar\'{e}-Sobolev and
Korevaar-Schoen Sobolev spaces on fractals. The inclusion
 ${\mathcal
P}^{1,p}_\sigma(\K)\subset KS^{1,p}_\sigma(\K)$ is true under
usual constraints on parameters ($p\geq 1,$ $\sigma>d/p$), and it
can be reversed for $p=2,\sigma=\frac{d_w}{2}.$

\begin{prop}\label{poinc-to-ks}  Suppose that the fractal $\K$ satisfies property {\bf (P)}.
Let $p\geq 1$ and  $\sigma>0$ be given.
\begin{enumerate}
\item[(1)]
If $\sigma> d/p$, then ${\mathcal P}^{1,p}_\sigma(\K)\subset
KS^{1,p}_\sigma(\K).$
\item[(2)] When $\sigma=\frac{d_w}{2},$ then ${\cal P}^{1,2}_{\sigma}(\K)=KS^{1,2}_\sigma(\K).$
\end{enumerate}
\end{prop}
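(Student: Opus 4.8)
The plan is to work throughout with the discrete Besov--Lipschitz description of the target space: since $KS^{1,p}_\sigma(\K)=Lip(\sigma,p,\infty)(\K)$, membership is equivalent to a uniform bound on $a_m^{(p)}(f)$, i.e. to controlling $I_m:=L^{m\sigma p}L^{md}\int\!\!\int_{\rho(x,y)\le c_0L^{-m}}|f(x)-f(y)|^p\,d\mu(x)\,d\mu(y)$ independently of $m$. For (1) I would first use Property {\bf (P)} and Lemma \ref{ind-n} to pass between balls and simplex-stars, so that the defining ball inequality of $\mathcal P^{1,p}_\sigma$ is equivalent to a simplex version $\barint_\Delta|f-f_\Delta|\,d\mu\le CL^{-m\sigma}\big(\tfrac1{\mu(\Delta^*)}\int_{\Delta^*}g^p\,d\nu\big)^{1/p}$ for every $m$-simplex $\Delta$. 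The same lemma shows that $\rho(x,y)\le c_0L^{-m}$ forces $\ind(x,y)\ge m-O(1)$, hence $x,y$ lie in a bounded cluster of adjacent $m$-simplices; thus $I_m\le CL^{m\sigma p}L^{md}\sum_{\Delta\in\mathcal{T}_m}\int_\Delta\int_{\Delta^*}|f(x)-f(y)|^p\,d\mu\,d\mu$.

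The heart of (1) is a telescoping oscillation estimate. For a nonvertex $x\in\Delta$ I would run the chain $\Delta=\Delta_m(x)\supset\Delta_{m+1}(x)\supset\cdots$ and bound each increment by the simplex inequality at level $j$; using $\mu(\Delta_j^*(x))\asymp L^{-jd}$ this gives $|f_{\Delta_{j+1}(x)}-f_{\Delta_j(x)}|\le CL^{-j\beta}\big(\int_{\Delta_j^*(x)}g^p\,d\nu\big)^{1/p}$ with $\beta:=\sigma-d/p$. Summing yields $|f(x)-f_\Delta|\le C\sum_{j\ge m}L^{-j\beta}\big(\int_{\Delta_j^*(x)}g^p\big)^{1/p}$, and here $\beta>0$ --- exactly the hypothesis $\sigma>d/p$ --- is what makes the series summable. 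Taking $p$-th powers by Jensen with the weights $L^{-j\beta}$ contributes the gain factor $\big(\sum_{j\ge m}L^{-j\beta}\big)^{p-1}\asymp L^{-m\beta(p-1)}$; integrating in $x$, using $\int_\Delta\int_{\Delta_j^*(x)}g^p\,d\nu\,d\mu\asymp L^{-jd}\int_{\Delta^*}g^p\,d\nu$, and summing the resulting geometric series gives the sharp bound $\int_\Delta|f-f_\Delta|^p\,d\mu\le CL^{-m\sigma p}\int_{\Delta^*}g^p\,d\nu$ (one checks $\beta p+d=\sigma p$). Inserting this back, with bounded overlap of the clusters, collapses the sum to $I_m\le C\int_\K g^p\,d\nu$ uniformly in $m$, so $f\in KS^{1,p}_\sigma(\K)$.

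For (2), the inclusion $\mathcal P^{1,2}_{d_w/2}(\K)\subset KS^{1,2}_{d_w/2}(\K)$ is the case $p=2$, $\sigma=d_w/2$ of part (1), admissible because $\rho=L^{d_w-d}>1$ forces $d_w>d$, i.e. $\sigma=d_w/2>d/2=d/p$. For the reverse inclusion I would invoke the known identification of the Besov--Lipschitz space with the domain of the Brownian Dirichlet form on a nested fractal, $Lip(d_w/2,2,\infty)(\K)=\mathcal D(\E)$ with $\|f\|_{Lip}^2\asymp\E(f,f)$. Then $f\in KS^{1,2}_{d_w/2}(\K)=Lip(d_w/2,2,\infty)(\K)$ gives $f\in\mathcal D(\E)$, and Theorem \ref{poinc-balls} applies with $g=\grad^{1/2}$: since $\int_\K\grad\,d\nu=\E(f,f)<\infty$ we have $g\in L^2(\K,\nu)$, and \pref{eq-poinc-balls} together with $\mu(B(x_0,Ar))\asymp(Ar)^d$ is precisely the $(1,2,d_w/2)$-Poincar\'e inequality, whence $f\in\mathcal P^{1,2}_{d_w/2}(\K)$ (the restriction to nonvertex centres is removed by continuity of $f$).

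The main obstacle in (1) is the bookkeeping imposed by the two measures: the oscillations on the left are measured against $\mu$ while the gradient lives against the singular measure $\nu$, so the scaling exponents must be tracked precisely. The delicate point is that the naive telescoping --- dropping the Jensen weight factor --- over-counts and produces a bound growing like $L^{m(p-1)\beta}$ for $p>1$; it is exactly the factor $\big(\sum L^{-j\beta}\big)^{p-1}$ that restores the correct exponent $\sigma p$, in agreement with the $L^2$ computation already implicit in the proof of Theorem \ref{najwazniejsze}. In (2) the genuine new input is the reverse inclusion, which rests on the external Besov-versus-Dirichlet-domain identification; granting it, the Poincar\'e inequality is already delivered by Theorem \ref{poinc-balls}, so nothing further is required.
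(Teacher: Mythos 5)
Your part (2) is exactly the paper's argument: the inclusion $\subset$ comes from part (1) via $d_w>d$, and the reverse inclusion from the identification ${\cal D}({\cal E})=Lip(\frac{d_w}{2},2,\infty)=KS^{1,2}_{d_w/2}(\K)$ together with Theorem \ref{poinc-balls}. Your part (1) is also, at heart, the paper's chaining argument in different clothing: since $\mu(\Delta_j^*(x))\asymp L^{-jd}$, your telescoped sum $\sum_{j\ge m}L^{-j\beta}\bigl(\int_{\Delta_j^*(x)}g^p\,d\nu\bigr)^{1/p}$ coincides term by term with the paper's fractal Riesz potential $J_p(g,m,x)$, your Jensen-with-weights step plays the role of the paper's Lemma \ref{kmm}, and the geometric series requiring $\beta=\sigma-d/p>0$ is where both proofs spend the hypothesis $\sigma>d/p$. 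The genuine difference is organizational: the paper proves the pointwise bound \pref{riesz-maxi} involving potentials at both $x$ and $y$ and then integrates it, whereas you first upgrade the $(1,p,\sigma)$ inequality to an integrated $(p,p,\sigma)$ oscillation bound on simplices and then sum; both are standard variants within the Koskela--MacManus/Haj{\l}asz--Koskela circle of ideas, and your exponent bookkeeping ($\beta p+d=\sigma p$) is correct.

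However, one step fails as literally written. Your reduction bounds $I_m$ by $\sum_{\Delta}\int_\Delta\int_{\Delta^*}|f(x)-f(y)|^p\,d\mu\,d\mu$, but the estimate you then ``insert back'' controls only $\int_\Delta|f-f_\Delta|^p\,d\mu$, i.e.\ oscillation within a single $m$-simplex. For $x\in\Delta$ and $y\in\Delta'\subset\Delta^*$ with $\Delta'\neq\Delta$ you also need $|f_\Delta-f_{\Delta'}|$, and this cross-simplex term cannot be produced by the simplex inequality or by telescoping alone: two adjacent $m$-simplices need not lie in any common simplex of comparable scale (on the gasket, $m$-simplices meeting at a point of $V^{(1)}$ have only $\K$ itself as a common ancestor), so no single telescoping chain covers both. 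The cure is to return to the ball form of the Poincar\'e inequality: $\Delta\cup\Delta'\subset B(v,2L^{-m})$ for the junction vertex $v$, and comparing $f_\Delta$ and $f_{\Delta'}$ with $f_{B(v,2L^{-m})}$ costs only a doubling factor and yields a bound of the form $CL^{-m\sigma p}$ times $\int g^p\,d\nu$ over a bounded cluster around $v$, which your bounded-overlap summation then absorbs. This is precisely the role of the paper's estimate \pref{potrzebnedalej3} and of the constant $k_0$ appearing there; your proof needs the same ingredient spelled out. (A cosmetic point in (2): extending the Poincar\'e inequality from nonvertex centres to all centres is a ball-enlargement/doubling argument rather than a consequence of continuity alone, though continuity of $f\in{\cal D}({\cal E})$ does also suffice.)
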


\begin{proof} Once (1) is proven, then the inclusion `$\subset$' in (2) would follow from the
relation $d_w>d$ (true for any nested fractal). As to the opposite
inclusion,   Theorem \ref{poinc-balls}  gives that the
$(1,2,\frac{d_w}{2})$--Poincar\'{e} inequality holds true for any
$f\in {\cal D}({\cal E})$. As ${\cal D} ({\cal E})=
Lip(\frac{d_w}{2}, 2,\infty)=KS^{1,2}_{d_w/2}(\K)$ (Theorem 5  of
\cite{KPP}), the inclusion `$\supset$' in (2) follows.

Therefore we need to prove (1).  Our proof is a modification of
the proof of Theorem 4.1 of \cite{Kos-MMan}. See also
\cite{Haj-Kos}, Theorem 5.3 and its proof.

Assume that $f\in {\mathcal P}^{1,p}_\sigma(\K)$ and that the pair
$(f,g)$ satisfies the $(1,p,\sigma)$-Poincar\'e inequality.
Introduce a fractal version of Riesz potentials:
\[
J_p(g,n,x)=\sum_{m=0}^\infty
L^{-(m+n)\sigma}\left(\frac{1}{\mu(\Delta^*_{n+m}(x))}
\int_{\Delta^*_{n+m}(x)}g^p(z)d\nu(z)\right)^{1/p}.
\]

The potentials $J_p(g,n,x),$ are well-defined for all nonlattice
points of $\K$ (this is a set of full measure $\mu$).

We will show that there exists a constant $k_0\geq 0$ such that
for $\mu$-a.a.  $x,y\in \K$ with $\ind(x,y)\geq k_0$ one has:
\begin{eqnarray}\label{riesz-maxi}
|f(x)-f(y)|&\leq& C\left( J_p(g,
\mbox{ind}\,(x,y)-k_0,x)+J_p(g,\mbox{ind}\,(x,y)-k_0,y)\right)
\end{eqnarray}
Since by assumption $f\in L^p(\K,\mu)\subset L^1(\K,\mu),$
$\mu$-almost every point of $\mathcal K$ is a $\mu$-Lebesgue point
for $f$ (cf. \cite{Tol}):
\[
f(x)=\lim_{r\to 0}\barint_{B(x,r)} f(y)d\mu(y)= \lim_{r\to
0}f_{B(x,r)}.
\]

Let $x,y$ be two nonlattice Lebesgue points for $f$ and let
$n_0=\ind(x,y)$. We use a classical chaining argument. Denote
$r_m=\frac{\alpha}{AL^m}$, where $A\geq 1$ is the constant from
the Poincar\'e inequality \pref{poinc-sob-frac}, and
$\alpha\in(0,1)$  comes  from \pref{popopo}. Using the Jensen's
inequality, the doubling property for $\mu$, the Poincar\'e
inequality \eqref{poinc-sob-frac} and  \pref{popopo}, we obtain
the following chain of inequalities:

\begin{eqnarray}\label{potrzebnedalej1}
|f(x)-f_{B(x,r_{n_0})}| &\leq & \sum_{m=0}^\infty
|f_{B(x_0,r_{n_0+m})}-f_{B(x,r_{n_0+m+1})}|\nonumber \\
&\leq& \sum_{m=0}^\infty \barint_{B(x,r_{n_0+m+1})}
|f(z)-f_{B(x,r_{n_0+m})}|d\mu(z)\nonumber \\
&\leq & \sum_{m=0}^\infty
\barint_{B(x,r_{n_0+m})}|f(z)-f_{B(x,r_{n_0+m})}|d\mu(z)\nonumber \\
&\leq & C\sum_{m=0}^\infty
r_{n_0+m}^\sigma\left(\frac{1}{\mu(B(x,Ar_{n_0+m}))}
\int_{B(x,Ar_{n_0+m})}g(z)^pd\nu(z)\right)^{1/p}\nonumber\\
&\leq & C\,\sum_{m=0}^\infty L^{-(m+n_0)\sigma}
\left(\frac{1}{\mu(\Delta_{n_0+m}^*(x))}
\int_{\Delta_{n_0+m}^*(x)} g(z)^pd\nu(z)\right)^{1/p}\nonumber \\
&=& C J_p(g,\mbox{{\rm ind}}\,(x,y),x).
\end{eqnarray}
Similar estimate holds for $y:$
\begin{eqnarray}\label{potrzebnedalej2}
|f(y)-f_{B(y,r_{n_0})}|&\leq & CJ_p(g,\ind(x,y),y).
\end{eqnarray}
From Lemma \ref{ind-n}, there exists a universal constant $C_1>0$
such that when $\ind(x,y)=n_0,$ then $\rho(x,y)\leq C_1 L^{-n_0}=
\frac{C_1 A}{\alpha}r_{n_0}.$ For short, denote
$R=(1+\frac{C_1A}{\alpha})r_{n_0}$. Let $k_0$ be the smallest
number such that for any $z\in\K$, $B(z,AR)\subset
\Delta^*_{n_0-k_0}(z)$, cf. \eqref{popopo}. Using the Poincar\'e
inequality \eqref{poinc-sob-frac} and the Ahlfors-regularity of
$\mu$ we get:
\begin{eqnarray}\label{potrzebnedalej3}
& &
|f_{B(x,r_{n_0})}-f_{B(y,r_{n_0})}| \\
& \leq &
|f_{B(x,r_{n_0})}-f_{B(x,R)}|+|f_{B(y,r_{n_0})}-f_{B(x,R)}|\nonumber \\
&\leq & \barint_{B(x,r_{n_0})} |f(z)-f_{B(x,R)}|\,d\mu(z)+
\barint_{B(y,r_{n_0})} |f(z)-f_{B(x,R)}|\,d\mu(z)\nonumber\\
&\leq & \left(\frac{\mu(B(x,R))}{\mu(B(x,r_{n_0}))}
+\frac{\mu(B(x,R))}{\mu(B(y,r_{n_0}))}\right)\barint_{B(x,R)}|f(z)-f_{B(x,R)}|
\,d\mu(z)\nonumber\\
&\leq &
CR^\sigma\left(\frac{1}{\mu(B(x,AR)}\int_{B(x,AR)}g(z)^pd\nu(z)\right)^{1/p}\nonumber\\
&\leq& CJ_p(g,\ind(x,y)-k_0,x).
\end{eqnarray}
The estimate \pref{riesz-maxi} follows when we sum up
\pref{potrzebnedalej1}, \pref{potrzebnedalej2},
\pref{potrzebnedalej3}.

The proposition  will be proven once we show that
\[\sup_{m\geq k_0}\left(a_m^{(p)}(f)\right)^p<\infty,\]
where
\[\left(a_m^{(p)}(f)\right)^p=L^{m(\sigma p+d)}\int\int_{\rho(x,y)\leq\frac{\alpha}{L^m}}
|f(x)-f(y)|^pd\mu(x)d\mu(y).\]
We have:
\begin{eqnarray}\label{dzisiaj1}
\nonumber & &
\left(a_m^{(p)}(f)\right)^p \\
& \le & \nonumber
\int_\K\int_{\Delta_m^*(x)} |f(x)-f(y)|^p d\mu(y)d\mu(x)\\
&\leq& \int_\K \left(\sum_{k=m+1}^\infty
\int_{\Delta_{k-1}^*(x)\setminus\Delta^*_k(x)} |f(x)-f(y)|^p
d\mu(y)\right)d\mu(x)
\end{eqnarray}
Since $y\in \Delta_{k-1}^*(x)\setminus\Delta^*_k(x)$ is tantamount
to $\ind(x,y)=k+1$, we can use the previously obtained estimate
\pref{riesz-maxi} and get
\begin{eqnarray}\label{dzisiaj2}
& & \int_{\Delta_{k-1}^*(x)\setminus\Delta^*_k(x)}|f(x)-f(y)|^p d\mu(y) \nonumber\\
&\leq & C\left(\int_{\Delta_{k-1}^*(x)\setminus\Delta^*_k(x)(x)}
  J_p^p(g,k-k_0,x)d\mu(y)+
\int_{\Delta_k^*(x)\setminus\Delta^*_{k+1}(x)}
  J_p^p(g,k-k_0,y)d\mu(y)\right)\nonumber\\
&=& C (I_k(x)+II_k(x)).
\end{eqnarray}
To estimate these two parts we need a lemma, which is similar to
Lemma 4.3 (ii), (iii) of \cite{Kos-MMan}:
\begin{lem}\label{kmm}
Let $N\geq 1,$ $p\geq 1, $ $\sigma>0$ be given and let the
functions $f\in L^p(\K,\mu),$  $g\in L^p(\K,\nu)$ satisfy the
$(1,p,\sigma)$-Poincar\'e inequality. Then for $\mu$-almost all
$x\in \K$
\begin{equation}\label{kmm1}
\int_{\Delta_N^*(x)} J_p^p(g,N,y) d\mu(y) \leq CL^{-N\sigma
p}\int_{\Delta_N^{**}(x)}g^p d\nu
\end{equation}
and
\begin{equation}\label{kmm2}
  \int_\K J_p^p(g,N,y)d\mu(y)\leq C\ L^{-N\sigma p}\int_\K g^p d\nu.
\end{equation}
\end{lem}

\begin{proof}
For $y\in\Delta_N^*(x)$ and $k\geq N$ one has
$\Delta_k^*(y)\subset \Delta_N^*(y)\subset\Delta_N^{**}(x)$ and
therefore
\begin{eqnarray*}
J_p(g,N,y)&=&\sum_{m=0}^\infty
L^{-(m+N)\sigma}\left(\frac{1}{\mu(\Delta_{N+m}^{*}(y))}\int_{\Delta_{N+m}^{*}(y)}
g^p(z)d\nu(z)\right)^{1/p}
\nonumber\\
&\leq & C\sum_{m=0}^\infty L^{-(\sigma-\frac{d}{p})(N+m)}
\left(\int_{\Delta_{N}^{**}(x)}g^p(z)d\nu(z)\right)^{1/p}\\
&=& CL^{-(\sigma-\frac{d}{p})N}\left(\int_{\Delta_N^{**}(x)}
g^p(z) d\nu(z)\right)^{1/p}.
\end{eqnarray*}
Since $\mu(\Delta^*_N(x)) \leq C L^{-Nd},$  \pref{kmm1} follows.

To see \pref{kmm2}, observe that, using \pref{kmm1}:
\begin{eqnarray*}
\int_\K J_p^p(g,N,y) d\mu(y)&=&\sum _{\Delta\in {\mathcal
T}_N}\int_\Delta J_p^p(g,N,y)d\mu(y) \\
&\leq & C \sum_{\Delta\in {\mathcal T}_N}
L^{-Np\sigma}\int_{\Delta^{**}} g^pd\nu.
\end{eqnarray*}
A covering argument as the one used to conclude the proof of
Theorem \ref{najwazniejsze} gives \pref{kmm2}.
\end{proof}

\noindent {\em Conclusion of the proof of Proposition
\ref{poinc-to-ks}.} Since
\[
I_k(x)\leq \mu(\Delta_{k-1}^*(x))J_p^p(g,k-k_0,x)\leq
CL^{-kd}J_p^p(g,k-k_0,x),
\]
one has, using \pref{kmm2}
\begin{eqnarray}\label{pierwsze}
\int_\K I_k(x)d\mu(x) &\leq &
\int_\K\int_{\Delta^*_k(x)}J_p^p(g,k-k_0,x)d\mu(y)d\mu(x)
\nonumber\\ & \leq &
C\int_\K J_p^p(g,k-k_0,x)\mu(\Delta_{k}^*(x))d\mu(x)\nonumber\\
&\leq & CL^{-k(d+\sigma p)}\int_\K g^pd\nu.
\end{eqnarray}
To estimate the other part, we use \pref{kmm1}:
\begin{eqnarray}\label{drugie}
    \int_\K II_k(x)d\mu(x)&\leq &
    \int_\K\int_{\Delta^*_k(x)}J_p^p(g,k-k_0,y)d\mu(y)d\mu(x)\nonumber\\
    &\leq & C\int_\K\int_ {\Delta^*_{k-k_0}(x)} J_p^p(g,k-k_0,y)d\mu(y)d\mu(x)\nonumber\\
    &\leq & C \int_\K L^{-k\sigma p}\int_{\Delta_{k-k_0}^{**}(x)}g^pd\nu\,d\mu(x)\nonumber\\
    &\leq & CL^{-k(d+\sigma p)}\int_\K g^pd\nu.
\end{eqnarray}
Summing up \pref{pierwsze} and \pref{drugie} over $k\geq m$ we get
that the right-hand side of \pref{dzisiaj1} is not bigger than
\begin{eqnarray*}
    C\sum_{k=m}^\infty L^{-k(d+\sigma p)}\int_\K g^pd\nu
    &=& CL^{-m(d+\sigma p)}\int_\K g^pd\nu,
\end{eqnarray*}
so that $$\left(a^{(p)}_m(f)\right)^p\leq \int_\K g^pd\nu,$$ once
$m\geq k_0.$ The proposition follows.
\end{proof}

We now turn our attention to the relation of Poincar\'{e}-Sobolev
spaces ${\cal P}^{1,p}_\sigma({\K})$ to  Haj{\l}asz-Sobolev spaces
$M^{1,p}_\sigma (\K).$ It has been proven  by Hu that
 $M^{1,p}_\sigma(\K)\subset
KS^{1,p}_\sigma({\mathcal K}),$ for all $p\geq 1$ and $\sigma>0 $
(Theorem 1.1. of \cite{Hu}).  Moreover, this theorem asserts that
one has the inclusion $KS^{1,p}_\sigma({\mathcal K})\subset
M^{1,p}_{\sigma'}(\K),$ for all $0<\sigma'<\sigma.$ It is not
known whether  the inclusion  $KS^{1,p}_\sigma({\mathcal
K})\subset M^{1,p}_{\sigma}(\K)$ holds true on nested fractals,
even if we assume that Property {\bf (P)} holds.

Recall that  for $p\geq 1,$ the 'weak' $L^p,$ or the {\em
Marcinkiewicz space} $L^p_w(\K,\mu)$ consists of those measurable
functions $f$ for which
\[
\sup_{t>0}\{t^p \,\mu \{x: |f(x)|>t \} \}< +\infty.
\]
We can consider the `weak' Hajlasz-Sobolev spaces.
\begin{defi} Let $p\geq 1 $ and $\sigma >0.$
One says that $f\in L^p(\K,\mu)$ belongs to the weak
Hajlasz-Sobolev space $(M^{1,p}_\sigma)_w(\K)$,  if there exists
$g\in L^p_w(\K,\mu)$ such that \pref{hajl-sob} holds true.
\end{defi}

\noindent
 We have the following.
\begin{prop}\label{weak-hajl} Suppose that the nested fractal $\K$ satisfies
Property {\bf (P)}. Assume $p\geq 1, $ $\sigma>0$. Then one has:
\begin{enumerate}
\item[(1)]
$ {\mathcal P}^{1,p}_{\sigma}(\K) \subset (M^{1,p}_\sigma)_w(\K)
\subset M^{1,p'}_\sigma(\K),$ with any $1\leq p'<p $ (the last
inclusion requires $p>1$).
\item[(2)] When $p=2,$ $\sigma=d_w/2,$ then $M^{1,2}_\sigma(\K)\subset {P}^{1,2}_\sigma({\K}). $
\end{enumerate}
\end{prop}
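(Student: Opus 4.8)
The plan is to prove the three inclusions separately, recycling the chaining estimate \pref{riesz-maxi} from the proof of Proposition \ref{poinc-to-ks}: that estimate was derived for every $\sigma>0$ and $p\ge 1$ and holds, for $f\in{\cal P}^{1,p}_\sigma(\K)$ with gradient $g\in L^p(\K,\nu)$, at $\mu$-a.e.\ pair of Lebesgue points $x,y$ with $\ind(x,y)\ge k_0$.

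For the first inclusion ${\cal P}^{1,p}_{\sigma}(\K)\subset(M^{1,p}_\sigma)_w(\K)$ I would introduce the fractional maximal function
\[
{\cal M}g(x)=\sup_{m\ge 0}\Big(\frac{1}{\mu(\Delta^*_m(x))}\int_{\Delta^*_m(x)}g^p\,d\nu\Big)^{1/p},
\]
bound each average occurring in $J_p(g,n,x)$ by ${\cal M}g(x)$, and sum the resulting geometric series (convergent because $\sigma>0$) to get $J_p(g,n,x)\le C L^{-n\sigma}{\cal M}g(x)$. Choosing $n=\ind(x,y)-k_0$ and using $\rho(x,y)\asymp L^{-\ind(x,y)}$ from Lemma \ref{ind-n}, estimate \pref{riesz-maxi} turns into $|f(x)-f(y)|\le C\rho(x,y)^\sigma({\cal M}g(x)+{\cal M}g(y))$ whenever $\ind(x,y)\ge k_0$; the finitely many coarse scales $\ind(x,y)<k_0$ (where $\rho(x,y)\ge c>0$) are absorbed by the trivial bound $|f(x)-f(y)|\le c^{-\sigma}\rho(x,y)^\sigma(|f(x)|+|f(y)|)$. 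Hence $h=C{\cal M}g+c^{-\sigma}|f|$ satisfies the Haj{\l}asz inequality \pref{hsob-frac} for $\mu$-a.e.\ $x,y$, and the remaining task is to place $h$ in $L^p_w(\K,\mu)$.

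The hard part is exactly this last step --- a weak-$(1,1)$ bound for a \emph{mixed} maximal operator, where one averages the energy measure $\nu$ but measures level sets with the (typically singular) Hausdorff measure $\mu$. The key observation is that no regularity of $\nu$ is needed: comparing simplices with balls through \pref{popopo} and Ahlfors regularity gives $({\cal M}g)^p\le C\sup_{r>0}\mu(B(x,r))^{-1}\int_{B(x,r)}g^p\,d\nu$, and a $5r$-covering argument using only the doubling of $\mu$ and the disjointness of the selected balls yields $\mu\{{\cal M}g>\lambda\}\le C\lambda^{-p}\int_\K g^p\,d\nu$. Thus ${\cal M}g\in L^p_w(\K,\mu)$; since $|f|\in L^p(\K,\mu)\subset L^p_w(\K,\mu)$ and weak-$L^p$ is closed under addition, $h\in L^p_w(\K,\mu)$ and $f\in(M^{1,p}_\sigma)_w(\K)$. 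The second inclusion $(M^{1,p}_\sigma)_w(\K)\subset M^{1,p'}_\sigma(\K)$ with $p'<p$ is then a soft consequence of $\mu(\K)=1<\infty$: writing $\int_\K h^{p'}d\mu=p'\int_0^\infty t^{p'-1}\mu\{h>t\}\,dt$ and splitting at $t=1$ shows $L^p_w(\K,\mu)\subset L^{p'}(\K,\mu)$ (the tail $\int_1^\infty t^{p'-1-p}\,dt$ converges precisely when $p'<p$), so the weak gradient of $f$, and $f$ itself, improve to $L^{p'}(\mu)$ while \pref{hsob-frac} is preserved.

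Finally, part (2) requires no new work: Hu's Theorem 1.1 of \cite{Hu} gives $M^{1,2}_{d_w/2}(\K)\subset KS^{1,2}_{d_w/2}(\K)$, and Proposition \ref{poinc-to-ks}(2) identifies $KS^{1,2}_{d_w/2}(\K)={\cal P}^{1,2}_{d_w/2}(\K)$; composing the two yields $M^{1,2}_{d_w/2}(\K)\subset{\cal P}^{1,2}_{d_w/2}(\K)$, as claimed.
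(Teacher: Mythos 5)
Your proposal is correct and follows essentially the same route as the paper's proof: the fractal maximal function dominating the Riesz potentials $J_p$ so that \pref{riesz-maxi} becomes the Haj{\l}asz inequality \pref{hsob-frac}, a $5r$-covering plus doubling argument for the mixed weak-type bound $\mu\{g>t\}\leq Ct^{-p}\int_\K g^p\,d\nu$, the inclusion $L^p_w(\K,\mu)\subset L^{p'}(\K,\mu)$ from $\mu(\K)<\infty$, and Hu's theorem composed with Proposition \ref{poinc-to-ks}(2) for part (2). Your explicit treatment of the coarse scales $\ind(x,y)<k_0$ by adding $c^{-\sigma}|f|$ to the Haj{\l}asz gradient patches a detail the paper leaves implicit, but it is a refinement of the same argument rather than a different approach.
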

\begin{proof} (1) Once we have proven estimates for fractal Riesz-potentials,
this result is immediate. Let $f\in {\mathcal
P}^{1,p}_\sigma(\K),$ and let $(f,\tilde f)$ satisfy the
$(1,p,\sigma)$ Poincar\'{e} inequality.

The function $g$ (corresponding to the upper gradient), needed in
the definition of Haljasz-Sobolev spaces,  will be a fractal
variant of the Hardy-Littlewood maximal function: for $x\in
\K\setminus V^{(\infty)}$ we set
\begin{eqnarray*}
g(x)=(M\tilde f)(x)&\stackrel{def}{=}& \sup_{m\geq 1}
\left(\frac{1}{\mu(\Delta_m^*(x))} \int_{\Delta_m^*(x)}  \tilde
f^p d\nu \right)^{1/p}.
\end{eqnarray*}
It is obvious that for any $n\geq 1$
\begin{equation}\label{october1}
J_p(\tilde f,n,x)\leq C L^{-n\sigma}
 g(x),
\end{equation}
with some universal constant $C>0.$ Recall the estimate
\pref{riesz-maxi}:
\[|f(x)-f(y)|\leq  C\left( J_p(\tilde f,
\mbox{ind}\,(x,y)-k_0,x)+J_p(\tilde f
,\mbox{ind}\,(x,y)-k_0,y)\right)
\]
($k_0$ was a universal index depending only on the geometry of the
fractal), so that further, taking into account the relation
\pref{ind-dist}
\[|f(x)-f(y)|\leq C L^{-\sigma \ind(x,y)} (g(x)+g(y))\leq C \rho(x,y)^\sigma (g(x)+g(y)).\]

 The argument that proves  $g\in
L^p_w(\K,\nu)$ is also classical. Fix $t>0$ and suppose that
$g(x)>t$ for some $x\in\K\setminus\vinf$. By the definition of
$g$, there exists $m=m(x)$ such that
\begin{equation}\label{jutro1}
  \mu(\Delta_m^*(x))\leq \frac{1}{t^p}\int_{\Delta_m^*(x)}
  \tilde f^p  d\nu.
\end{equation}
Consider the covering of the set $A(t)=\{x\in \K: g(x)>t\}$ by
balls $B(x,2 L^{-m(x)})$, $x\in A(t)\setminus\vinf$. By the
$5r$-covering lemma there is a countable subcollection of these
balls, $B_i=B(x_i, \rho_i)$, with $\rho_i=2L^{-m(x_i)}$, such that
the $B_i$'s are pairwise disjoint, yet  $A(t) \subset \bigcup_i
B(x_i, 5\rho_i)$. Due to \pref{popopo}, the sets
$\Delta^*_{m(x_i)}(x_i)$ are disjoint. Then, by the doubling
property of $\mu,$
\begin{eqnarray*}
\mu(\{x: g(x)>t\} &\leq& \mu\left(\bigcup_i B(x_i,5\rho_i)\right)
\leq
C\sum_i\mu(B(x_i,\rho_i))\\
&\leq& C\sum_i \mu(B(x_i, \alpha L^{-m(x_i)})) \leq
C\sum_i\mu(\Delta^*_{m(x_i)})\\
&\stackrel{\pref{jutro1}}{\leq}& \frac{C}{t^p}\sum_i
\int_{\Delta^*_{m(x_i)}} \tilde f^p d\nu \leq \frac{C}{t^p}\int_\K
\tilde f^p d\nu.
\end{eqnarray*}  Since $\mu(\K)<\infty, $ we have $L^p_w(\K,\mu)\subset L^{p'}(\K,\mu)$ for
$p'<p.$ This way (1)
is proven. Assertion (2) follows from Hu's inclusion
$M^{1,2}_\sigma(\K)\subset KS^{1,p}_\sigma(\K)$ and Proposition
\ref{poinc-to-ks} (2) above.

\end{proof}

}
\section{Appendix}

We will now prove the statement from Remark \ref{num}.  Set
\begin{eqnarray*}\alpha_0 =\inf\{\mbox{dist}\,(A,B): A,B\in{\cal T}_2,
A\cap B=\emptyset\} &\mbox{  and } &\alpha = L\alpha_0.
\end{eqnarray*}

{More precisely, we will be proving the following.}

\begin{prop}\label{ap1} Let $\K$ be the nested fractal
associated with similitudes $\{\phi_i\}_{i=1}^r$ with contraction
factor $L.$ Suppose that the $\phi_i$'s share their unitary parts,
i.e. there is an isometry $U:\rn\to\rn$ such that
$\phi_i(x)=\frac{1}{L} \,U(x)+t_i,$ $t_i\in\rn,$ $i=1,2,...,r.$
Then {\bf (P)} holds.
\end{prop}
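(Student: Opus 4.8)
The plan is to reduce Property {\bf (P)} to a statement about finitely many ``model configurations'' and then to use the shared unitary part to control them. First, recall from \eqref{eq-ind-n} that for nonvertex $x,y$ the condition $y\in\Delta_n^*(x)\setminus\Delta_{n+1}^*(x)$ is equivalent to $\mathrm{ind}(x,y)=n+1$; in particular $\Delta_{n+1}(x)\cap\Delta_{n+1}(y)=\emptyset$ while $\Delta_n(x)\cap\Delta_n(y)\neq\emptyset$. Since $x\in\Delta_{n+1}(x)$ and $y\in\Delta_{n+1}(y)$ (these simplices are well defined because $x,y$ are nonvertex), it suffices to bound $\mathrm{dist}(\Delta_{n+1}(x),\Delta_{n+1}(y))$ from below by $\alpha L^{-n}$ with a constant $\alpha>0$ independent of $n,x,y$.

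Set $T=\frac1L U$, so that every length-$k$ map is $\phi_w=T^k(\cdot)+\tau_w$ with the \emph{same} linear part $T^k=L^{-k}U^k$; consequently each $k$-simplex is a translate of the fixed shape $T^k\K$. I would then rescale by the similitude $\Psi=T^{-n}$ (factor $L^n$). Writing $u_x,u_y\in\W_n$ for the words of $\Delta_n(x),\Delta_n(y)$ and $i,j$ for the respective $(n+1)$-st address letters of $x,y$, a direct computation gives $\Psi(\Delta_n(x))=\K+p$ and $\Psi(\Delta_{n+1}(x))=\phi_i(\K)+p$ (and similarly $\K+q$, $\phi_j(\K)+q$ for $y$), where $p=T^{-n}\tau_{u_x}$ and $q=T^{-n}\tau_{u_y}$. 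Since $\Psi$ is a bijective similitude, $\mathrm{dist}(\Delta_{n+1}(x),\Delta_{n+1}(y))=L^{-n}\,\mathrm{dist}\bigl(\phi_i(\K)+p,\,\phi_j(\K)+q\bigr)$, and the two translated $1$-simplices on the right are disjoint. Thus everything reduces to bounding, by a positive constant, the distance between two disjoint $1$-simplices sitting inside two translated copies $\K+p$ and $\K+q$ of the whole fractal.

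The decisive point is that only finitely many relative positions $q-p$ can occur. Indeed, by nesting the intersection $\Delta_n(x)\cap\Delta_n(y)$ consists of common vertices, so there is $v\in V^{(n)}$ with $v=\phi_{u_x}(w_x)=\phi_{u_y}(w_y)$ for some $w_x,w_y\in V^{(0)}$; applying $\Psi$ and using $\Psi(\phi_{u_x}(z))=z+p$ yields $\Psi(v)=w_x+p=w_y+q$, whence $q-p=w_x-w_y\in V^{(0)}-V^{(0)}$, a finite set. Hence, after translating by $-p$, every occurring configuration belongs to the finite family indexed by triples $(\delta,i,j)$ with $\delta\in V^{(0)}-V^{(0)}$ and $i,j\in\{1,\dots,M\}$, namely the pair $\bigl(\phi_i(\K),\,\phi_j(\K)+\delta\bigr)$. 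For each such triple for which the two sets are disjoint, they are disjoint compact sets and so are at strictly positive distance; the minimum over this finite family is the sought $\alpha>0$. This gives $\rho(x,y)\ge\mathrm{dist}(\Delta_{n+1}(x),\Delta_{n+1}(y))\ge\alpha L^{-n}$, i.e. Property {\bf (P)}; inspecting these finitely many models (relating their $2$-subsimplices to genuine elements of $\mathcal T_2$) shows that one may in fact take $\alpha=L\alpha_0$.

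The main obstacle, and the only place the hypothesis on the unitary parts is used, is precisely the finiteness in the previous paragraph. When $\Delta_n(x)$ and $\Delta_n(y)$ lie in \emph{different} $(n-1)$-simplices — which genuinely happens, e.g. for sub-simplices meeting across the junction of two level-$1$ pieces of the Sierpi\'nski gasket — there is no common $(n-1)$-simplex into which both could be pulled back, so a naive self-similar rescaling fails. The shared unitary part rescues the argument by making all $n$-simplices honest translates of $T^n\K$, which both legitimizes the single rescaling $\Psi=T^{-n}$ and forces the relative shift $q-p$ into the finite set $V^{(0)}-V^{(0)}$. Without this hypothesis the copies would be rotated relative to one another, and the reduction to finitely many models would have to track the symmetry group $\bf G$ as well.
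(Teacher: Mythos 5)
Your proof is correct, and it takes a genuinely different route from the paper's. The paper fixes the constant in advance as $\alpha=L\alpha_0$, where $\alpha_0$ is the minimal distance between disjoint $2$-simplices, and then runs a double induction: its key step (Lemma \ref{adjacent}) handles two neighbouring $n$-simplices whose addresses differ already in the first letter by blowing up at the junction point $v$ and proving the self-similarity identity $\K_{i_1i_2}-v=S(\K_{i_1}-v)$, which in turn rests on Lindstr\"om's Proposition IV.13 (each vertex lies in exactly one $1$-cell); the shared unitary part enters there to make $S=\frac1L U$ the common linear part of all the $\phi_i$. Your argument dispenses with induction entirely: the shared unitary part makes every $n$-simplex a translate of $T^n\K$, a single conjugation by $\Psi=T^{-n}$ turns the pair $(\Delta_{n+1}(x),\Delta_{n+1}(y))$ into a pair of translated $1$-simplices, and---this is your key observation, absent from the paper---the nesting axiom confines the relative shift $q-p$ to the finite set $V^{(0)}-V^{(0)}$, so a minimum over the finitely many disjoint compact model pairs $\bigl(\phi_i(\K),\phi_j(\K)+\delta\bigr)$ yields a uniform $\alpha>0$. (Your treatment silently covers the case $\Delta_n(x)=\Delta_n(y)$ via $\delta=0$, which is fine.) What each approach buys: yours is shorter, avoids the junction-point lemma and the appeal to Lindstr\"om, and produces $\alpha$ as an explicit finite minimum; the paper's induction yields the geometrically explicit constant $L\alpha_0$ and a standalone lemma on neighbouring simplices. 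One caveat: your closing claim that inspecting the models shows one may take $\alpha=L\alpha_0$ is not substantiated---relating a configuration realized at level $n$ to a genuine pair of $2$-simplices is precisely what the paper's induction accomplishes, and your finiteness argument by itself does not supply it---but this is immaterial, since Property {\bf (P)} only asserts the existence of some positive $\alpha$.
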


 The key argument in the proof is provided by the following lemma.
\begin{lem}\label{adjacent}  Let $n\geq 1.$
Suppose $A,B$ are two neighbouring $n-$simplices, and let $A_1\subset A, $  $B_1\subset B $
be two $(n+1)-$simplices that are disjoint.
Then $\mbox{dist}\,(A_1,B_1)\geq \alpha L^{-n}$.
\end{lem}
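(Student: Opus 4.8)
The plan is to strip away the common part of the two simplices by a single inverse similitude and then run an induction on the remaining number of levels. First I would locate the smallest common ancestor $Q=\phi_u(\K)$ of $A$ and $B$ (with $|u|=k_0$), so that $A$ and $B$ lie in two distinct $1$-children of $Q$ (the degenerate case $A=B$, where $Q=A$, is disposed of separately below). Applying $\phi_u^{-1}$, which rescales all distances by $L^{k_0}$, reduces the statement to the following self-contained claim, to be proved by induction on $m:=n-k_0\ge 1$: if $\phi_a(\K),\phi_b(\K)$ ($a\neq b$) are two adjacent $1$-cells of $\K$ meeting at their single common essential vertex $z'=\phi_a(v_s)=\phi_b(v_t)$, and $P\subset\phi_a(\K)$, $R\subset\phi_b(\K)$ are disjoint subsimplices of level $m+1$, then $\mathrm{dist}(P,R)\ge \alpha_0 L^{-(m-1)}=\alpha L^{-m}$. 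Tracing the rescalings back then yields $\mathrm{dist}(A_1,B_1)=L^{-k_0}\mathrm{dist}(P,R)\ge \alpha_0 L^{-(n-1)}=\alpha L^{-n}$, which is the assertion.

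The base case $m=1$ is immediate: then $P,R$ are genuine disjoint $2$-simplices, so $\mathrm{dist}(P,R)\ge\alpha_0$ by the very definition of $\alpha_0$. (For $A=B$ one rescales the common cell to $\K$, obtaining two disjoint $1$-cells, whose distance is $\ge L\alpha_0=\alpha$ because applying any $\phi_s$ turns them into two disjoint $2$-simplices.) The heart of the argument — and the only place where the equal–unitary–part hypothesis is used — is the inductive step. Because every $\phi_i$ has the same linear part $\tfrac1L U$, any two cells of the same level are translates of one another, and the identification $\phi_a(v_s)=\phi_b(v_t)$ forces the single–similitude identity $\phi_a(x)=\phi_b(x-\sigma)$ with $\sigma=v_s-v_t$. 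Consequently one genuine similitude of ratio $1/L$ carries the pair of adjacent cells $(\phi_a(\K),\phi_b(\K))$ onto a pair of adjacent cells lying one level deeper inside a single copy of $\K$.

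In the inductive step I would first look at the level-$2$ ancestor cells of $P$ and $R$: if they are disjoint, then $\mathrm{dist}(P,R)\ge\alpha_0$ outright; if they are adjacent they must be the two corner $2$-cells meeting at $z'$, and there the identity $\phi_a(x)=\phi_b(x-\sigma)$ lets me map this corner configuration, by one scale-$1/L$ similitude, onto a pair of adjacent $1$-cells carrying $P,R$ to disjoint subsimplices of level $m$. The inductive hypothesis (the Reduced Claim for $m-1$) then gives the bound $\alpha L^{-(m-1)}$ for the images, and undoing the similitude produces the factor $L^{-1}$, i.e. $\mathrm{dist}(P,R)\ge\alpha L^{-m}$, closing the induction.

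The step I expect to be the main obstacle is precisely this corner case: verifying that the two corner $2$-cells, which sit inside two \emph{different} $1$-cells, are the image under one similitude of a pair of adjacent $1$-cells of a single copy of $\K$. This is false without the equal–unitary–part assumption — in general the two cells are rotated by different powers of $U$, so no single similitude aligns them and the self-similar recursion collapses — whereas under the assumption it follows from the translate structure together with the essential-fixed-point identification at $z'$. The delicate bookkeeping the induction is designed to manage is tracking which vertex plays the role of the shared corner after each rescaling, i.e. checking that the identification produced at each stage is again a one-step essential identification to which the identity $\phi_a(x)=\phi_b(x-\sigma)$ applies.
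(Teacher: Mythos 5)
Your proposal is correct and follows essentially the same route as the paper's proof: stripping the common address prefix is the paper's case $k_0>1$, and your corner-case reduction is exactly the paper's key identity \pref{star}, namely $\K_{i_1i_2}-v=S(\K_{i_1}-v)$ and $\K_{w_1w_2}-v=S(\K_{w_1}-v)$, which the paper proves from the equal-unitary-part assumption together with Lindstr\o m's fact that each essential fixed point lies in exactly one $1$-cell. The only point of care is that the ratio-$1/L$ similitude realizing this corner reduction is $x\mapsto S(x-z')+z'$ (equivalently $\phi_a\circ\phi_s\circ\phi_a^{-1}$) rather than $\phi_a^{-1}$ alone, since the latter maps the corner pair only onto a \emph{translate} of $(\phi_a(\K),\phi_b(\K))$ --- this is precisely the vertex-identification verification you flag as the delicate step, and it goes through as you predict.
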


\begin{proof}
We proceed by induction on $n.$ Clearly, the statement is true for $n=1$.

Suppose that the statement is true for $1,...,n-1.$ Let
$A,B\in{\cal T}_n,$ $ A_1,B_1\in {\cal T}_{n+1}$ be as in the
statement; let $(i_1,...,i_n)$ be  the address of $A$ and
$(w_1,.,,,w_n)$ -- the address of $B.$ Define $k_0=\min\{l:
i_l\neq w_l\}.$ One has $1\leq k_0\leq n.$

If  $k_0>1$ then $A,B\subset \K_{i_1...i_{k_0-1}}\in {\cal T}_{k_0-1}.$
Set
\begin{eqnarray*}
A'=\phi_{i_1...i_{k_0-1}}^{-1}(A), &&         B'=\phi_{i_1...i_{k_0-1}}^{-1}(B) \;\;\;\;
(\mbox{we have }\; A',B'\in {\cal T}_{n-k_0+1}),   \\
A'_1=\phi_{i_1...i_{k_0-1}}^{-1}(A_1), && B_1'=\phi_{i_1...i_{k_0-1}}^{-1}(B_1)\;\;
(\mbox{we have }\; A_1',B_1'\in {\cal T}_{n-k_0+2}).
\end{eqnarray*}
Those simplices satisfy the assumptions for $n-k_0+1\leq (n-1))$
and the statement follows.

Now, suppose that $k_0=1$. We have
\begin{eqnarray*}
  \K_{i_1}\supset \K_{i_1i_2} \supset...\supset \K_{i_1...i_n}
    &=& A\supset A_1= \K_{i_1...i_ni_{n+1}}, \\
  \K_{w_1}\supset \K_{w_1w_2} \supset...\supset \K_{w_1...w_n}
    &=& B\supset B_1= \K_{w_1...w_nw_{n+1}}.
\end{eqnarray*}
Let $v$ be a junction point of $A$ and $B$. Because of the
inclusions above, \linebreak $v\in  \K_{i_1i_2}\cap \K_{w_1w_2}
\subset \K_{i_1}\cap \K_{w_1}$ as well.

We will now show that $\K_{i_1i_2}\cup\K_{w_1w_2}$ is similar to $\K_{i_1}\cup\K_{w_1}.$
More precisely, we will see that
\begin{equation}\label{star}
\K_{i_1i_2} - v = S(\K_{i_1}-v) \qquad \text{and} \qquad
\K_{w_1w_2} - v = S(\K_{w_1}-v),
\end{equation}
where $S=\frac{1}{L} U$ is the similitude such that $\phi_i=S+t_i.$

Since $v\in \K_{i_1}\cap\K_{w_1}\subset V^{(1)}$, there
exist $z_1, z_2\in V^{(0)}$ and mappings $\phi_{j_1}$,
$\phi_{j_2}$ such that $z_l$ is the fixed point of $\phi_{j_l}$,
$l=1,2$, and $v=\phi_{i_1}(z_1)=\phi_{w_1}(z_2)$. Further, since
$v\in\K_{i_1i_2}$, there exist another essential fixed point $u$,
 such that $v=\phi_{i_1i_2}(u)$. Then we have $\phi_{i_1}(z_1) =
\phi_{i_1i_2}(u)$ and so $z_1 = \phi_{i_2}(u)$. In particular,
$z_1\in\K_{i_2}\cap\K_{j_1}$. By Proposition IV.13 of \cite{Lin},
any element in $\vo$ belongs to exactly one $n$-cell for each $n$.
It follows that $j_1=i_2$. The same argument for $\K_{w_1w_2}$
gives $j_2=w_2$.

Since $S$ is linear, we have
\[ S(K_{i_1}-v)=S(\phi_{i_1}(K)-\phi_{i_1}(z_1))= S(SK+t_{i_1}-Sz_1-t_{i_1}) = S^2(K-z_1).\]
On the other hand, since $z_1=\phi_{i_2}(z_1)$, we get
\begin{align*}
  K_{i_1i_2}-v &= \phi_{i_1}(\phi_{i_2}(K))-\phi_{i_1}(z_1) \\
  &= S(\phi_{i_2}(K))-S(z_1) \\
  &= S(\phi_{i_2}(K)-\phi_{i_2}(z_1)) \\
  &= S(SK-Sz_1)\\
  &= S^2(K-z_1).
\end{align*}
Identical arguments hold for the pair $\K_{w_1}$ and $\K_{w_1w_2}$
and the  proof of \pref{star} is complete.

Now,
\begin{eqnarray*}
A'=S^{-1}(A-v)+v,\;\;&&\;\; B'=S^{-1}(B-v)+v,\\
A_1'=S^{-1}(A_1-v)+v,\;\;&&\;\; B_1'=S^{-1}(B_1-v)+v,
\end{eqnarray*}
are two pairs of $(n-1)-$ and $(n-2)-$simplices satisfying the
assumptions, hence $\mbox{dist}\, (A_1', B_1')\geq
\frac{\alpha}{L^{n-1}},$ and thus
$\mbox{dist}\,(A_1,B_1)\geq\frac{\alpha}{L^{n}}.$ This completes
the proof.
\end{proof}

\noindent {\em Proof of Proposition \ref{ap1}.} We proceed by
induction on $n.$

If $n=1$ and $y\notin \Delta_1^*(x)\setminus \Delta_2^*(x)$,
then the $2-$simplices $\Delta_2(x)$ and $\Delta_2(y)$ are disjoint. Thus,
$\rho(x,y)\geq\mbox{dist}\,(\Delta_2(x),\Delta_2(y)) \geq\alpha_0= \alpha/L.$

Suppose now that the statement is true for $1,2,...,n-1,$ and
take\linebreak $y\in\Delta_n^*(x)\setminus \Delta_{n+1}^*(x).$
Then the sets $\Delta_{n+1}(x)$ and $\Delta_{n+1}(y)$ are
disjoint, whereas $\Delta_n(x)$ and $\Delta_n(y)$ are not. There
are two possibilities: either $\Delta_n(x)=\Delta_n(y),$ or they
are adjacent $n-$simplices. Let $(i_1,...,i_n)$ be the address of
$\Delta_n(x)$ and $(w_1,...,w_n)$ be the address of $\Delta_n(y).$

If $\Delta_n(x)=\Delta_n(y)$, we consider points
$x'=\phi_{i_1...i_{n-1}}^{-1}(x)$, $y'=\phi_{i_1...i_{n-1}}^{-1}(y).$
Then $\Delta_2(x')$ and $\Delta_2(y')$ are disjoint
$2-$simplices, so from the assumption we get $\rho(x',y')\geq \frac{\alpha}{L},$
thus $\rho(x,y)\geq \frac{\alpha}{L^n}.$

If $\Delta_n(x)$ and $\Delta_n(y)$ are adjacent $n-$simplices, we
apply lemma \ref{adjacent} to $A=\Delta_n(x),$ \linebreak
$B=\Delta_n(y),$ $A_1=\Delta_{n+1}(x),$ $B_1=\Delta_{n+1}(y).$
\hfill$\Box$\\

{\bf Acknowledgements.} The work of K.P.P. was supported  by
the Polish Ministry of Science grant no. N N201 397837 (years
2009-2012). Part of this research was conducted while K.P.P. was
visiting Univerit\'{e} Blaise Pascal, Clermont-Ferrand. The author
wants to thank UBP for its hospitality. A.S. was partially
supported by the MNiSW grant N N201 373136.

\end{document}